\documentclass[11pt]{article}

\usepackage[margin=1in]{geometry}
\usepackage{amsmath,amssymb,amsthm,mathtools}
\usepackage{microtype}
\usepackage{enumitem}
\usepackage[hidelinks]{hyperref}

\allowdisplaybreaks
\setlist[enumerate]{leftmargin=*,itemsep=2pt,topsep=4pt}
\setlist[itemize]{leftmargin=*,itemsep=2pt,topsep=4pt}

\newtheorem{theorem}{Theorem}[section]
\newtheorem{proposition}[theorem]{Proposition}
\newtheorem{lemma}[theorem]{Lemma}
\newtheorem{corollary}[theorem]{Corollary}
\theoremstyle{definition}

\theoremstyle{remark}

\newcommand{\cF}{\mathcal F}

\newcommand{\cE}{\mathcal E}
\newcommand{\cU}{\mathcal U}
\newcommand{\Om}{\Omega}
\newcommand{\Def}{\operatorname{Def}}
\newcommand{\dotcupunion}{\mathbin{\dot\cup}}
\newcommand{\clin}{c^{\mathrm{lin}}}

\title{Robust Repulsion for Growing Crowns\\
in Linear Hypergraphs}
\author{Mahesh Ramani}
\date{16 August 2026}

\begin{document}
\maketitle

\begin{abstract}
Put \(q=r-1\), \(t=k-1\), and \(D=tq+1\).  For an edge \(e\) of a
linear \(C^r_{1,k}\)-free \(r\)-uniform hypergraph, define
\[
 \delta_H(e)=\sum_{v\in e}\frac1{d_H(v)}-\frac rD.
\]
The defect satisfies \(\delta_H(e)\ge0\).  At equality, every vertex of
\(e\) has degree \(D\), and the petal trace at \(e\) is a disjoint
union of \(t\) affine planes of order \(q\).  Equivalently, restoring the
base line gives \(t\) projective planes of order \(q\) with common line
\(e\).

For growing crowns, the equality structure is stable in the following sense.  If
\(q_j\to\infty\), \(2\le t_j\le q_j\), and \(e_j\) is an edge of a finite
linear \(C_{1,t_j+1}^{q_j+1}\)-free hypergraph satisfying
\[
 \frac{t_j^2}{q_j}\to0,
 \qquad
 t_j^3\delta_{H_j}(e_j)\to0,
\]
then, for every fixed \(0<\theta<1\),
\[
 \frac{
 |\{f\ne e_j:f\cap e_j\ne\varnothing,\
 \delta_{H_j}(f)\ge\theta/t_j^2\}|
 }{(q_j+1)(t_j q_j)}
 \to1.
\]
Thus an edge close to equality is adjacent almost entirely to edges with
defect of order at least \(t_j^{-2}\).  A uniform form gives absolute
constants \(Q_0,\varepsilon_0,c_0>0\) such that, whenever
\(q\ge Q_0t^2\) and \(\delta_H(e)<\varepsilon_0/t^3\), at least
\(\tfrac12(q+1)tq\) neighbors of \(e\) have defect at least
\(1/(100t^2)\).  Consequently,
\[
 \clin_{q+1,t+1}\le t-\frac{c_0}{t},
\]
where \(\clin_{r,k}\) denotes the asymptotic linear Tur\'an coefficient
for \(C^r_{1,k}\).
\end{abstract}

\section{Introduction}\label{sec:intro}

For \(3\le k\le r\), the \(r\)-uniform \(k\)-crown
\(C^r_{1,k}\) consists of a base edge \(e_0\) and pairwise disjoint
petals \(e_1,\ldots,e_k\) satisfying
\[
 e_0\cap e_i=\{v_i\}\qquad(i\in[k]),
\]
where \(v_1,\ldots,v_k\) are distinct points of \(e_0\).  Generalized
crowns in linear hypergraphs were studied by Zhang, Broersma, and Wang
\cite{ZhangBroersmaWang2025}; Adak obtained the reciprocal-degree
localization used below \cite{Adak2026}.

Set
\begin{equation}\label{eq:parameters}
 q=r-1,\qquad t=k-1,\qquad
 D=tq+1,\qquad D^-=(t-1)q+1.
\end{equation}
For an edge \(e\), write
\[
 \Phi_H(e)=\sum_{v\in e}\frac1{d_H(v)},
 \qquad
 \delta_H(e)=\Phi_H(e)-\frac rD.
\]
The local bound is \(\delta_H(e)\ge0\).  Equality forces every degree
on \(e\) to equal \(D\), and the petal trace decomposes into \(t\) affine
planes of order \(q\).  A quantitative form of the localization argument is
needed for stability; standard facts on affine and projective planes may be
found in \cite{Dembowski1968}.

The stability problem is to control a second edge meeting an edge for which
\(\delta_H(e)\) is small.  The useful error parameter is the number of
uncovered point pairs in large subsets of the affine-plane structure forced
by equality.  Linearity makes these uncovered pairs a resource shared by
external edges: two distinct external edges cannot use the same pair.  A
second low-defect edge, on the other hand, forces many external
intersections.  Comparing these upper and lower bounds yields the local
repulsion theorem and, after double counting, a uniform gap in the
asymptotic linear Tur\'an coefficient.

\section{Colored traces and the equality configuration}\label{sec:traces}

Fix an edge
\[
                         e=\{v_1,\ldots,v_r\}
\]
of a linear \(r\)-uniform hypergraph \(H\).  For \(i\in[r]\), define
the \(i\)-th petal family
\[
\cF_i(e)=
\{f\setminus\{v_i\}:f\in E(H)\setminus\{e\},\ v_i\in f\}.
\]
Every member is a \(q\)-set outside \(e\), and
\[
                 x_i:=|\cF_i(e)|=d_H(v_i)-1.
\]
The index \(i\) is regarded as a color.

\begin{lemma}\label{lem:trace-linear}
Each \(\cF_i(e)\) is a matching.  If \(i\ne j\),
\(A\in\cF_i(e)\), and \(B\in\cF_j(e)\), then
\(|A\cap B|\le1\).
\end{lemma}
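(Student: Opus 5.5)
Both claims follow directly from linearity of \(H\), meaning that two distinct edges share at most one vertex. The plan is to lift each trace set back to the edge of \(H\) it came from and then apply linearity to the two lifted edges.

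\emph{Matching property.} Take two distinct members \(A=f\setminus\{v_i\}\) and \(A'=f'\setminus\{v_i\}\) of \(\cF_i(e)\), where \(f,f'\in E(H)\setminus\{e\}\) both contain \(v_i\). Since \(A\ne A'\), we have \(f\ne f'\). The edges \(f\) and \(f'\) already share \(v_i\), so linearity gives \(f\cap f'=\{v_i\}\). Removing \(v_i\) then shows \(A\cap A'=\varnothing\). Hence \(\cF_i(e)\) is a matching. We should also record that distinct edges through \(v_i\) give distinct traces, because each edge is recovered as \(A\cup\{v_i\}\). This justifies the count \(x_i=d_H(v_i)-1\), although the lemma itself does not need it.

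\emph{Cross-color bound.} Let \(i\ne j\), \(A=f\setminus\{v_i\}\) and \(B=g\setminus\{v_j\}\). We first check \(f\ne g\). If \(f=g\), this edge would contain both \(v_i\) and \(v_j\), so it would meet \(e\) in at least two vertices. Linearity would then force \(f=e\), which is excluded. Since \(f\ne g\), linearity gives \(|f\cap g|\le1\). Finally \(A\cap B\subseteq f\cap g\), so \(|A\cap B|\le1\).

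There is no real obstacle here. The only point needing care is the case \(f=g\) in the cross-color part, and linearity applied to the pair \(f,e\) rules it out.
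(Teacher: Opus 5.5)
Your proof is correct and follows the paper's argument: two distinct edges through \(v_i\) meet only at \(v_i\), so same-color traces are disjoint, and linearity between the two distinct lifted edges bounds cross-color intersections by one. Your explicit check that \(f\ne g\), using linearity with \(e\), spells out what the paper leaves implicit in the phrase ``meet \(e\) at different points.''
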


\begin{proof}
Two distinct hyperedges through \(v_i\) already meet at \(v_i\), so
linearity makes their remaining \(q\)-sets disjoint.  Hyperedges
corresponding to different colors meet \(e\) at different points, and
linearity permits at most one further intersection.
\end{proof}

A \emph{rainbow matching} is a family of pairwise disjoint petals of
distinct colors.

\begin{proposition}[Crown--rainbow equivalence]\label{prop:crown-rainbow}
The edge \(e\) is the base of a copy of \(C_{1,k}^r\) if and only if
\(\cF_1(e),\ldots,\cF_r(e)\) contain a rainbow matching of size \(k\).
\end{proposition}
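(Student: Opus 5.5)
We prove the two directions separately, directly from the definitions. The only input needed is the description of petals as traces, together with the fact (Lemma~\ref{lem:trace-linear}) that every member of every \(\cF_i(e)\) is a \(q\)-set disjoint from \(e\).

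For the forward direction, suppose \(e\) is the base \(e_0\) of a copy of \(C^r_{1,k}\) with petals \(e_1,\ldots,e_k\) and distinct attachment points \(v_{i_1},\ldots,v_{i_k}\in e\), where \(e\cap e_j=\{v_{i_j}\}\).

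1. Each petal \(e_j\) is an edge of \(H\) different from \(e\), since it meets \(e\) in a single vertex and \(r\ge 3\). Hence \(A_j:=e_j\setminus\{v_{i_j}\}\) belongs to \(\cF_{i_j}(e)\).
2. The colors \(i_1,\ldots,i_k\) are distinct because the attachment points are distinct.
3. The \(A_j\) are pairwise disjoint because the petals are pairwise disjoint.

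So \(\{A_1,\ldots,A_k\}\) is a rainbow matching of size \(k\).

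For the converse, take a rainbow matching \(A_1,\ldots,A_k\) with distinct colors \(i_1,\ldots,i_k\). By definition each \(A_j=f_j\setminus\{v_{i_j}\}\) for some edge \(f_j\ne e\) containing \(v_{i_j}\). We check that \(e\) together with \(f_1,\ldots,f_k\) forms a crown.

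1. \(e\cap f_j=\{v_{i_j}\}\). Linearity and \(f_j\ne e\) give \(|e\cap f_j|\le1\). Alternatively, \(A_j\) lies outside \(e\).
2. The petals are pairwise disjoint. We have
\[
f_j\cap f_l=(A_j\cup\{v_{i_j}\})\cap(A_l\cup\{v_{i_l}\}).
\]
This is empty because \(A_j\cap A_l=\varnothing\), both \(A_j\) and \(A_l\) avoid \(e\), and \(v_{i_j}\ne v_{i_l}\).
3. The \(f_j\) are distinct edges, since they are pairwise disjoint and nonempty.

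Therefore \(e,f_1,\ldots,f_k\) is a copy of \(C^r_{1,k}\) with base \(e\).

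There is no real obstacle. The one point needing care is in the converse: the petal edges \(f_j\) must be disjoint not only on their traces \(A_j\) but also at their attachment points. This is exactly what the rainbow (distinct-colors) condition guarantees. One must also confirm that the traces avoid \(e\), which follows from linearity as recorded before Lemma~\ref{lem:trace-linear}.
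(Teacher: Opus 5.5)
Your proof is correct and is the paper's argument: delete each petal's attachment point to get a rainbow matching, and conversely re-adjoin the base points. You also spell out the checks the paper leaves implicit, namely that distinct colors give distinct attachment points and that traces avoiding \(e\) make the full petals disjoint (that last fact is stated in the text just before Lemma~\ref{lem:trace-linear}, not in the lemma itself, but it follows from linearity as you say).
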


\begin{proof}
Delete from each crown petal its point on \(e\).  This gives a rainbow
matching.  Conversely, adjoin the appropriate base point to every
member of a rainbow matching.
\end{proof}

Let
\[
                         \mathcal M=\{A_1,\ldots,A_h\}
\]
be a maximum rainbow matching, where \(A_j\) has color \(c_j\), and put
\[
                         U_{\mathcal M}=A_1\cup\cdots\cup A_h.
\]
Every petal of a color not used by \(\mathcal M\) meets
\(U_{\mathcal M}\), or the matching could be enlarged.  For such a
petal \(A\), define
\[
T_{\mathcal M}(A)=
\{j\in[h]:A\cap A_j\ne\varnothing\}
\]
and
\[
J_{\mathcal M}=
\sum_{\substack{A\text{ of a color}\\\text{unused by }\mathcal M}}
\bigl(|T_{\mathcal M}(A)|-1\bigr)\ge0.
\]

\begin{proposition}[Matching-blocker inequality]\label{prop:blocker}
Let \(C(\mathcal M)=\{c_1,\ldots,c_h\}\).  Then
\[
\sum_{i\notin C(\mathcal M)}x_i+J_{\mathcal M}
\le
\sum_{w\in U_{\mathcal M}}
\min\{r-h,d_H(w)-1\}.
\]
\end{proposition}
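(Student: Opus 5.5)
We will double count incidences between petals of unused colors and points of \(U_{\mathcal M}\).  Let \(\mathcal P\) be the family of all petals whose color is not in \(C(\mathcal M)\), and let
\[
I=\{(A,w):A\in\mathcal P,\ w\in A\cap U_{\mathcal M}\}.
\]
The left-hand side will be bounded by \(|I|\), and \(|I|\) will then be bounded point by point.

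\emph{Lower bound on \(|I|\).}  Fix \(A\in\mathcal P\).  By maximality of \(\mathcal M\), the set \(A\) meets \(U_{\mathcal M}\), so \(|T_{\mathcal M}(A)|\ge1\).  The members \(A_j\) of \(\mathcal M\) are pairwise disjoint, so the sets \(A\cap A_j\) for \(j\in T_{\mathcal M}(A)\) are nonempty and pairwise disjoint.  Hence
\[
|A\cap U_{\mathcal M}|\ge|T_{\mathcal M}(A)|=1+\bigl(|T_{\mathcal M}(A)|-1\bigr).
\]
By Lemma \ref{lem:trace-linear}, each of these intersections actually has size at most one, but only the lower bound is needed here.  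Summing over \(A\in\mathcal P\), and using \(|\mathcal P|=\sum_{i\notin C(\mathcal M)}x_i\), we get
\[
|I|\ge\sum_{i\notin C(\mathcal M)}x_i+J_{\mathcal M}.
\]

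\emph{Upper bound at a fixed point.}  Fix \(w\in U_{\mathcal M}\) and count the petals \(A\in\mathcal P\) with \(w\in A\).  By Lemma \ref{lem:trace-linear}, each \(\cF_i(e)\) is a matching, so \(w\) lies in at most one petal of each color.  There are \(r-h\) unused colors, so \(w\) lies in at most \(r-h\) such petals.

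We also need the bound \(d_H(w)-1\).  A petal of color \(i\) containing \(w\) comes from an edge \(f\ne e\) with \(\{v_i,w\}\subseteq f\).  Distinct petals containing \(w\) come from distinct edges through \(w\).  The point \(w\) lies in the member \(A_j\) of \(\mathcal M\), which comes from the edge \(f_j=A_j\cup\{v_{c_j}\}\), and \(f_j\) contains \(w\).  The edge \(f_j\) has color \(c_j\in C(\mathcal M)\), so it does not produce a petal in \(\mathcal P\).  Also \(w\notin e\), so \(e\) itself is not among these edges.  Therefore the edges giving petals in \(\mathcal P\) through \(w\) are distinct edges through \(w\) other than \(f_j\), and there are at most \(d_H(w)-1\) of them.

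Combining the two point bounds and summing over \(w\in U_{\mathcal M}\) gives
\[
|I|\le\sum_{w\in U_{\mathcal M}}\min\{r-h,\,d_H(w)-1\},
\]
which together with the lower bound proves the proposition.

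The only delicate point is the \(-1\) in \(d_H(w)-1\).  It relies on the matching edge \(f_j\) through \(w\) being excluded, which holds because its color belongs to \(C(\mathcal M)\).  The remaining steps are routine.
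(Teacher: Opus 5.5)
Your proof is correct and is essentially the paper's argument: you identify the left side with the number of incidences between unused-color petals and \(U_{\mathcal M}\), then bound the incidences at each \(w\in U_{\mathcal M}\) by \(r-h\) (one petal per unused color) and by \(d_H(w)-1\) (the one selected edge through \(w\) is excluded). The only differences are in detail. You prove just the inequality \(|I|\ge\sum_{i\notin C(\mathcal M)}x_i+J_{\mathcal M}\), whereas the paper notes this is an equality, and you spell out why the matching edge through \(w\) cannot contribute.
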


\begin{proof}
The left side is the total number of incidences between unused-color
petals and \(U_{\mathcal M}\).  Indeed, the selected petals are
disjoint and a petal meets each of them in at most one point.
At a fixed \(w\in U_{\mathcal M}\), at most one petal from each unused
color contains \(w\), giving at most \(r-h\) incidences.  Exactly one
selected hyperedge contains \(w\), so at most \(d_H(w)-1\) other
hyperedges can contribute.  Summing the smaller bound proves the proposition.
\end{proof}

\subsection{The equality case for colored traces}

Consider an abstract collection
\[
                         \cF_1,\ldots,\cF_r
\]
of matchings of \(q\)-sets.  Sets of different colors meet in at most
one point.  Assume
\[
                         |\cF_i|=tq\qquad(i\in[r])
\]
and that there is no rainbow matching of size \(t+1\).

\begin{lemma}[Blocking at equality]\label{lem:perfect-blocker}
Let \(\mathcal M\) be a rainbow matching of size \(t\).  For every
unused color, its \(tq\) sets meet \(U_{\mathcal M}\) in singletons,
and those singletons partition \(U_{\mathcal M}\).
\end{lemma}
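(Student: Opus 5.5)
Fix an unused color \(i\), so \(i\notin C(\mathcal M)\); one exists because \(r=q+1>t\). Since \(\mathcal M\) is a rainbow matching of size \(t\) and there is no rainbow matching of size \(t+1\), every set \(A\in\cF_i\) meets \(U_{\mathcal M}\). Otherwise \(\mathcal M\cup\{A\}\) would be a larger rainbow matching. We then count incidences between \(\cF_i\) and \(U_{\mathcal M}\). The members of \(\mathcal M\) are pairwise disjoint \(q\)-sets, so \(|U_{\mathcal M}|=tq\). The sets of \(\cF_i\) form a matching, so each point \(w\in U_{\mathcal M}\) lies in at most one member of \(\cF_i\). This is the single-color version of the count in Proposition~\ref{prop:blocker}. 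Hence
\[
 tq=|\cF_i|\le\sum_{A\in\cF_i}|A\cap U_{\mathcal M}|\le |U_{\mathcal M}|=tq .
\]

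Equality holds throughout, and we read the conclusion off from it. The first inequality is an equality, and each term \(|A\cap U_{\mathcal M}|\) is at least \(1\), so every \(A\in\cF_i\) meets \(U_{\mathcal M}\) in exactly one point. The second inequality is an equality, so every point of \(U_{\mathcal M}\) is covered by some member of \(\cF_i\). Members of \(\cF_i\) are disjoint, so no point is covered twice. Therefore the \(tq\) singletons \(A\cap U_{\mathcal M}\), for \(A\in\cF_i\), are distinct and cover \(U_{\mathcal M}\); that is, they partition it. Since \(i\) was an arbitrary unused color, this proves the lemma.

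I do not expect a real obstacle here. The only point needing care is the first inequality, which rests on the fact that each petal of an unused color must hit \(U_{\mathcal M}\). This uses the assumption that \(\mathcal M\) cannot be extended, which holds because its size \(t\) is already one below the forbidden size \(t+1\). We do not need \(\mathcal M\) to be maximum in any further sense. Note also that the argument never uses the condition \(|A\cap B|\le1\) for sets of different colors; only the matching property within a single color and the sizes \(|\cF_i|=tq\) enter.
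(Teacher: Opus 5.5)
Your proof is correct and matches the paper's argument: every unused-color set must meet \(U_{\mathcal M}\), since otherwise \(\mathcal M\) augments to a rainbow \((t+1)\)-matching, and then the \(tq\) pairwise disjoint sets of that color have nonempty, disjoint traces on the \(tq\)-point set \(U_{\mathcal M}\), so the traces are singletons that cover it. Your chain \(tq\le\sum_{A\in\cF_i}|A\cap U_{\mathcal M}|\le|U_{\mathcal M}|=tq\) just writes out explicitly the pigeonhole step that the paper states in one line.
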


\begin{proof}
Every unused-color set meets \(U_{\mathcal M}\), or it augments
\(\mathcal M\).  The \(tq\) sets of one color are disjoint, and their
nonempty intersections with the \(tq\)-point set \(U_{\mathcal M}\)
are disjoint.  Hence all intersections are singletons and cover the
union.
\end{proof}

\begin{lemma}[Extendability]\label{lem:extend}
Every rainbow matching of size at most \(t\) extends to one of size
\(t\).  If a color is unused by the initial matching, the extension can
be chosen to avoid that color.
\end{lemma}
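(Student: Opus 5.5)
We argue by augmentation within the abstract setting of this subsection: matchings \(\cF_1,\ldots,\cF_r\) of \(q\)-sets, each of size \(tq\), pairwise-color intersections of size at most one, and no rainbow matching of size \(t+1\). Let \(\mathcal M\) be a rainbow matching of size \(h<t\), and suppose some color \(i_0\) is unused by \(\mathcal M\) and is to be avoided. It suffices to produce a rainbow matching of size \(h+1\) that contains \(\mathcal M\) (or at least avoids \(i_0\)). Iterating this step then reaches size \(t\), and it never uses \(i_0\).

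For the augmentation step, let \(U_{\mathcal M}\) be the union of the members of \(\mathcal M\), so \(|U_{\mathcal M}|=hq\). Choose any unused color \(i\ne i_0\); one exists because \(r-h-1\ge r-t\ge 2\), using \(t\le q=r-1\). The \(tq\) sets of \(\cF_i\) are pairwise disjoint. Each member of \(\mathcal M\) has a different color from \(i\), so by the hypothesis it meets each set of \(\cF_i\) in at most one point. Moreover, the sets of \(\cF_i\) that meet \(U_{\mathcal M}\) do so in disjoint subsets of \(U_{\mathcal M}\), so at most \(hq\) of them meet \(U_{\mathcal M}\). Since \(tq>hq\), some set \(A\in\cF_i\) is disjoint from \(U_{\mathcal M}\), and \(\mathcal M\cup\{A\}\) is a rainbow matching of size \(h+1\) with new color \(i\ne i_0\). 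This step needs only the disjointness within a color and the size count, which is in the spirit of the counting behind Proposition~\ref{prop:blocker}.

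For completeness we check the boundary cases. When \(h=t\) there is nothing to prove. Maximality, i.e.\ the absence of a rainbow matching of size \(t+1\), is not needed for this direction; it only ensures that size \(t\) is the largest attainable. The only point needing care is the availability of a fresh color different from \(i_0\). This follows from \(r\ge t+2\), which holds because \(t\le q\) and so \(r=q+1\ge t+1\). At \(h\le t-1\) the number of colors that are neither used nor equal to \(i_0\) is \(r-h-1\ge r-t\ge1\). I expect this color-availability check to be the only obstacle, and it is resolved by the standing inequality \(k\le r\).
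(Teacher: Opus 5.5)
Your argument is correct and is essentially the paper's proof: choose a color that is neither used nor forbidden, observe that the \(hq\)-point union of the current matching can meet at most \(hq<tq\) members of that color's matching, add an unmet member, and iterate. One slip: your claims \(r-t\ge2\) and \(r\ge t+2\) are false when \(t=q\). This does no harm, because the argument only needs \(r-h-1\ge r-t\ge1\), which you also state correctly.
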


\begin{proof}
Suppose the current matching has size \(h<t\), and let \(j\) be a color
that is required to remain unused.  At most \(h+1\le t\) colors are
used or forbidden.  Since \(r=q+1\ge t+1\), some color \(i\) is neither.
Each selected \(q\)-set meets at most \(q\) members of the matching
\(\cF_i\), so the \(h\) selected sets block at most
\(hq<tq=|\cF_i|\) members of \(\cF_i\).  Add an unblocked member and
repeat.
\end{proof}

Join two sets of different colors when they intersect.

\begin{lemma}[Exact pair degree]\label{lem:pair-degree}
Every set meets exactly \(q\) sets of every other color.
\end{lemma}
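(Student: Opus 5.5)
The plan is to prove the two inequalities separately: the upper bound follows from the matching property alone, and the lower bound uses the equality-case blocking lemmas.

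\emph{Upper bound.} Fix a set \(A\) of color \(i\) and a color \(j\ne i\). The members of \(\cF_j\) are pairwise disjoint. Each one that meets \(A\) contains at least one point of \(A\), and no point of \(A\) lies in two of them. Since \(|A|=q\), at most \(q\) members of \(\cF_j\) meet \(A\).

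\emph{Lower bound.} I will show that every point of \(A\) lies in some member of \(\cF_j\).
\begin{itemize}
\item Start from the rainbow matching \(\{A\}\) of size \(1\le t\). Color \(j\) is unused by it.
\item By Lemma~\ref{lem:extend}, extend \(\{A\}\) to a rainbow matching \(\mathcal M\) of size exactly \(t\) that still does not use color \(j\). Then \(A\subseteq U_{\mathcal M}\).
\item Since there is no rainbow matching of size \(t+1\), Lemma~\ref{lem:perfect-blocker} applies to \(\mathcal M\) and the unused color \(j\). The \(tq\) members of \(\cF_j\) meet \(U_{\mathcal M}\) in singletons, and these singletons partition \(U_{\mathcal M}\).
\item Hence each of the \(q\) points of \(A\) lies in exactly one member of \(\cF_j\).
\item By the standing hypothesis, a set of color \(j\) meets \(A\) in at most one point. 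So the members of \(\cF_j\) covering the \(q\) points of \(A\) are pairwise distinct.
\end{itemize}
This gives at least \(q\) members of \(\cF_j\) meeting \(A\). Combined with the upper bound, \(A\) meets exactly \(q\) sets of color \(j\).

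\emph{Main obstacle.} The only delicate point is keeping color \(j\) unused during the extension; otherwise Lemma~\ref{lem:perfect-blocker} could not be applied to \(\cF_j\). This is exactly what the second clause of Lemma~\ref{lem:extend} provides. That clause needs a spare color at each step, which holds because \(r=q+1\ge t+1\).
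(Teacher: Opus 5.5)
Your proposal is correct and matches the paper's proof: the upper bound comes from \(\cF_j\) being a matching, and the lower bound comes from extending \(\{A\}\) to a rainbow \(t\)-matching that avoids color \(j\) (Lemma~\ref{lem:extend}) and then applying Lemma~\ref{lem:perfect-blocker}. Your explicit check that the \(q\) covering sets are pairwise distinct spells out a step the paper leaves implicit.
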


\begin{proof}
A \(q\)-set meets at most \(q\) members of a matching.  Fix
\(A\in\cF_i\) and another color \(j\).  Extend \(\{A\}\) to a rainbow
\(t\)-matching avoiding color \(j\), using Lemma \ref{lem:extend}.
By Lemma \ref{lem:perfect-blocker}, the color-\(j\) sets partition the
matching union by singleton intersections.  Exactly one passes through
each point of \(A\).
\end{proof}

\begin{lemma}[Transitivity of intersections]\label{lem:transitivity}
Let \(A,B,C\) have three distinct colors.  If \(A\) meets both \(B\)
and \(C\), then \(B\) meets \(C\).
\end{lemma}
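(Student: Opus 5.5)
The plan is to argue by contradiction, using the blocking structure at equality (Lemma \ref{lem:perfect-blocker}) for a rainbow $t$-matching built to contain $B$ and $C$ but to avoid the color of $A$.

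Let $A\in\cF_i$, $B\in\cF_j$, $C\in\cF_l$ with $i,j,l$ distinct, and suppose $A$ meets both $B$ and $C$ while $B\cap C=\varnothing$. Then $\{B,C\}$ is a rainbow matching of size $2$. In the growing-crown setting $t\ge2$, so $2\le t$, and Lemma \ref{lem:extend} applies. Since color $i$ is unused by $\{B,C\}$, that lemma extends $\{B,C\}$ to a rainbow matching $\mathcal M$ of size $t$ that still avoids color $i$. Let $U_{\mathcal M}$ be its union.

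Because $i$ is an unused color of $\mathcal M$, Lemma \ref{lem:perfect-blocker} says that every member of $\cF_i$ meets $U_{\mathcal M}$ in exactly one point. But $A$ contains a point of $B$ and a point of $C$. These two points are distinct, since $B$ and $C$ are disjoint, and both lie in $U_{\mathcal M}$. Hence $|A\cap U_{\mathcal M}|\ge2$, which contradicts the singleton conclusion. So $B$ meets $C$.

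I do not expect a serious obstacle. The one point to check is that Lemma \ref{lem:extend} may be applied to the $2$-matching $\{B,C\}$ while avoiding color $i$. This needs $t\ge2$, which the setting assumes, and the color count in that lemma's proof, which only requires $r\ge t+1$.
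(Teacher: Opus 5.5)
Your proposal is correct and is essentially the paper's own proof: assume \(B\cap C=\varnothing\), extend \(\{B,C\}\) to a rainbow \(t\)-matching avoiding the color of \(A\) via Lemma~\ref{lem:extend}, and note that \(A\) then meets the matching union in two distinct points, contradicting Lemma~\ref{lem:perfect-blocker}. The condition \(t\ge2\) that you flag holds throughout the paper, since \(k\ge3\), and not only in the growing-crown setting; for \(t=1\) the disjoint pair \(\{B,C\}\) would already be a forbidden rainbow matching of size \(t+1\).
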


\begin{proof}
If \(B\) and \(C\) were disjoint, extend \(\{B,C\}\) to a rainbow
\(t\)-matching avoiding the color of \(A\).  The set \(A\) would meet
the matching union at least twice, contrary to
Lemma \ref{lem:perfect-blocker}.
\end{proof}

\begin{lemma}[Pairwise block decomposition]\label{lem:block-decomp}
For every pair of colors, the bipartite intersection graph between them is
a disjoint union of \(t\) copies of \(K_{q,q}\).
\end{lemma}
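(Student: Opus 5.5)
Fix two colors \(i\ne j\) and let \(G_{ij}\) be the bipartite intersection graph between \(\cF_i\) and \(\cF_j\). The plan is to show that \(G_{ij}\) is \(q\)-regular, that each connected component is complete bipartite, and then to count components.

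\textbf{Regularity.} By Lemma \ref{lem:pair-degree}, every vertex of \(G_{ij}\) has degree exactly \(q\). Both sides have \(tq\) vertices.

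\textbf{Completeness of components.} It suffices to show that any two sets \(A,A'\in\cF_i\) with a common neighbor \(B\in\cF_j\) have the same neighborhood. Suppose \(C\in\cF_j\) meets \(A\); we want \(C\) to meet \(A'\). Transitivity (Lemma \ref{lem:transitivity}) needs three distinct colors, so route through a third color \(\ell\notin\{i,j\}\), which exists since \(r\ge3\).

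First, find an \(\ell\)-colored set meeting both \(A\) and \(B\). Let \(x\) be the unique point of \(A\cap B\) (it is unique by Lemma \ref{lem:trace-linear}). Extend \(\{A\}\) to a rainbow \(t\)-matching avoiding color \(\ell\) using Lemma \ref{lem:extend}. By Lemma \ref{lem:perfect-blocker}, some \(D\in\cF_\ell\) contains \(x\), so \(D\) meets both \(A\) and \(B\).

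Next, propagate along the chain:
\begin{itemize}
\item \(B\) meets \(D\) and \(A'\), so \(D\) meets \(A'\) by Lemma \ref{lem:transitivity} applied to \(B\).
\item \(A\) meets \(C\) and \(D\), so \(C\) meets \(D\) by Lemma \ref{lem:transitivity} applied to \(A\).
\item \(D\) meets \(C\) and \(A'\), so \(C\) meets \(A'\) by Lemma \ref{lem:transitivity} applied to \(D\).
\end{itemize}
Thus \(N(A)\subseteq N(A')\), and by symmetry \(N(A)=N(A')\).

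So on each side, "having a common neighbor" is an equivalence relation whose classes share a single neighborhood. The symmetric argument applies on the \(j\)-side. Hence every component is complete bipartite \(K_{a,b}\).

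\textbf{Counting.} Regularity forces \(a=b=q\) in every component. The \(tq\) sets of color \(i\) therefore split into \(tq/q=t\) classes, giving \(t\) disjoint copies of \(K_{q,q}\).

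\textbf{Main obstacle.} The only delicate point is the third-color bridge: Lemma \ref{lem:transitivity} needs three distinct colors, so the argument depends on producing \(D\) through the point \(A\cap B\). Lemma \ref{lem:perfect-blocker} supplies exactly this, because the color-\(\ell\) singletons cover the whole matching union. The remaining steps are routine.
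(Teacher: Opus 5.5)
Your proof is correct and follows essentially the same route as the paper. Both arguments get \(q\)-regularity from Lemma~\ref{lem:pair-degree}, then use a short chain of applications of Lemma~\ref{lem:transitivity} through a third color to show that two sets sharing a neighbor have equal neighborhoods, and finally count components. The one difference is how the third-color bridge is obtained. The paper takes any third-color set meeting \(A\) and gets that it also meets \(B\) by applying transitivity at \(A\). Your construction of \(D\) through \(A\cap B\) via Lemmas~\ref{lem:extend} and~\ref{lem:perfect-blocker} is valid, but it is not needed, so this step is less delicate than you suggest.
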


\begin{proof}
By Lemma \ref{lem:pair-degree}, the intersection graph is \(q\)-regular
with \(tq\) vertices in each part.  Suppose \(A,A'\) in the first part
have a common neighbor \(B\).  Choose a third color and a set \(C\) of
that color meeting \(A\).  For any neighbor \(B'\) of \(A\), repeated
applications of Lemma \ref{lem:transitivity} give
\[
A\sim B,\ A\sim C\Rightarrow B\sim C,\qquad
A\sim B',\ A\sim C\Rightarrow B'\sim C,
\]
\[
B\sim A',\ B\sim C\Rightarrow A'\sim C,\qquad
C\sim A',\ C\sim B'\Rightarrow A'\sim B'.
\]
Thus \(N(A)\subseteq N(A')\), and equality follows from regularity.
Along every even path, all vertices in the same part have the same
neighborhood.  Every connected component is therefore complete
bipartite.  Regularity makes it \(K_{q,q}\), and there are \(t\)
components.
\end{proof}

\begin{lemma}[Common labeling]\label{lem:common-label}
There are partitions
\[
       \cF_i=\cF_i^1\dotcupunion\cdots\dotcupunion\cF_i^t,
       \qquad |\cF_i^b|=q,
\]
such that two sets of different colors intersect if and only if they
have the same label.
\end{lemma}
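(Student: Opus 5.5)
The plan is to make the labels globally consistent across all \(r\) colors by fixing one reference color, reading off labels for it, and transporting them to the other colors through the blocks of Lemma \ref{lem:block-decomp}. Transitivity (Lemma \ref{lem:transitivity}) will then show that the transported labels are compatible for every pair of colors.

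First, fix color \(1\) and number the \(t\) copies of \(K_{q,q}\) in the intersection graph between colors \(1\) and \(2\) as \(1,\ldots,t\). Their color-\(1\) sides partition \(\cF_1\) into \(t\) classes of size \(q\), which become \(\cF_1^1,\ldots,\cF_1^t\).

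Next, take any color \(j\ne1\) and consider the intersection graph between colors \(1\) and \(j\). We must check that its components have the same color-\(1\) sides as the classes just defined. For \(j=2\) this holds by definition. For \(j\ge3\), let \(A,A'\in\cF_1\) lie in the same \(1\)–\(2\) block. They have a common color-\(2\) neighbor \(B\). By Lemma \ref{lem:pair-degree}, \(A\) meets some \(C\in\cF_j\). Since \(A\sim B\) and \(A\sim C\), Lemma \ref{lem:transitivity} gives \(B\sim C\). Then \(B\sim A'\) and \(B\sim C\) give \(A'\sim C\), so \(A\) and \(A'\) lie in the same \(1\)–\(j\) block. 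Both block structures partition \(\cF_1\) into \(t\) classes of size \(q\), and every class of the first lies inside a class of the second, so the two partitions coincide. We then define \(\cF_j^b\) as the color-\(j\) side of the \(1\)–\(j\) block whose color-\(1\) side is \(\cF_1^b\). By construction, sets of color \(1\) and color \(j\) intersect exactly when their labels agree.

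Finally, take two colors \(i,j\ne1\), with \(B\in\cF_i^b\) and \(C\in\cF_j^{b'}\). If \(b=b'\), pick any \(A\in\cF_1^b\). Then \(A\sim B\) and \(A\sim C\), so Lemma \ref{lem:transitivity} gives \(B\sim C\). If \(b\ne b'\), suppose for contradiction that \(B\sim C\). Pick \(A\in\cF_1^b\), so \(A\sim B\). Applying transitivity to the triple with \(B\) at the center (\(B\sim A\), \(B\sim C\)) gives \(A\sim C\). But \(A\) has label \(b\) and \(C\) has label \(b'\ne b\), which contradicts the color-\(1\) case.

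The only step that needs care is the consistency of the partitions of \(\cF_1\) across different second colors. The containment-plus-counting argument above handles it, using that every block has exactly \(q\) sets per side (Lemma \ref{lem:block-decomp}). Everything else is a direct application of Lemma \ref{lem:transitivity}, which is available because \(r\ge3\) supplies the needed third color.
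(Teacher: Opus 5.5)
Your proposal is correct and follows essentially the same route as the paper's proof. You fix a reference color and use Lemma~\ref{lem:transitivity} together with the size-\(q\) counting from Lemma~\ref{lem:block-decomp} to show that the partition of \(\cF_1\) does not depend on the second color; you then transfer labels to the other colors and settle pairs of nonreference colors by transitivity through a reference set.
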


\begin{proof}
Fix color \(1\).  For each \(j\ne1\), Lemma \ref{lem:block-decomp}
partitions \(\cF_1\) into common-neighborhood classes of size \(q\).
It remains to verify that this partition is independent of \(j\).

Suppose \(A,A'\in\cF_1\) have a common color-\(j\) neighbor \(B\).
Let \(\ell\notin\{1,j\}\) and choose \(C\in\cF_\ell\) meeting \(A\).
Transitivity first gives \(B\sim C\), and then \(A'\sim C\).  Hence
every color-\(\ell\) neighbor of \(A\) is a neighbor of \(A'\).
Both have \(q\) such neighbors, so the neighborhoods are equal.  The
same is true for every nonreference color.  The partitions of
\(\cF_1\) therefore coincide.

Label the common classes \(1,\ldots,t\).  A set of another color
receives the label of its neighborhood in \(\cF_1\).  Every label class
has size \(q\).  If two nonreference sets have the same label, a member
of the corresponding reference class meets both, so transitivity makes
them intersect.  Conversely, if they intersect, transitivity with a
reference neighbor shows that their reference neighborhoods coincide.
\end{proof}

\begin{theorem}[Colored affine-plane classification]\label{thm:colored-plane}
Under the hypotheses of this subsection, the colored system is the disjoint union of \(t\) affine
planes of order \(q\).  More precisely, for every \(b\in[t]\) there is
a \(q^2\)-point set \(\Om_b\) such that
\[
                         \bigcup_{i=1}^r\cF_i^b
\]
is the block set of an affine plane of order \(q\), with the
\(\cF_i^b\) as its \(q+1\) parallel classes.  The sets
\(\Om_1,\ldots,\Om_t\) are pairwise disjoint.
\end{theorem}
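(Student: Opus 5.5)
Take the partitions $\cF_i=\cF_i^1\dotcupunion\cdots\dotcupunion\cF_i^t$ from Lemma~\ref{lem:common-label}, and for each label $b$ put
\[
\Om_b=\bigcup_{A\in\cF_1^b}A .
\]
Since $\cF_1$ is a matching and $|\cF_1^b|=q$, the set $\Om_b$ has exactly $q^2$ points. The sets $\Om_b$ are pairwise disjoint because they are unions of disjoint blocks from one matching.

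The first step is to show that every block of label $b$, of any color, lies inside $\Om_b$. Let $B\in\cF_j^b$ with $j\ne1$. By Lemma~\ref{lem:common-label}, $B$ meets exactly the $q$ members of $\cF_1^b$ and no other member of $\cF_1$. Each intersection is at most one point by Lemma~\ref{lem:trace-linear}, and $|B|=q$. We need every point of $B$ to lie in some color-$1$ block, and the cleanest route is Lemma~\ref{lem:perfect-blocker}. Extend $\{B\}$ to a rainbow $t$-matching $\mathcal M$ avoiding color~$1$ using Lemma~\ref{lem:extend}. Then the color-$1$ sets meet $U_{\mathcal M}$ in singletons that partition $U_{\mathcal M}$. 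So each point of $B$ lies in exactly one color-$1$ set, and that set meets $B$, so it has label $b$. Hence $B\subseteq\Om_b$, and the $q$ intersections $B\cap A$ for $A\in\cF_1^b$ are distinct points covering $B$.

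Next, each class $\cF_j^b$ is a partition of $\Om_b$. It consists of $q$ pairwise disjoint $q$-subsets of a $q^2$-set, so it is a parallel class. This gives $q+1$ parallel classes, $q(q+1)=q^2+q$ blocks, and blocks of different colors with the same label meet in exactly one point. The meeting is at least one point by Lemma~\ref{lem:common-label} and at most one by Lemma~\ref{lem:trace-linear}. Blocks of the same color and label are disjoint.

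It remains to verify the affine-plane axiom that two distinct points $x,y\in\Om_b$ lie on exactly one block. Count pairs. Each block covers $\binom q2$ pairs, so the total is
\[
q(q+1)\binom q2=\binom{q^2}{2}.
\]
No pair is covered twice, since two distinct blocks meet in at most one point. Therefore every pair is covered exactly once. A structure of this kind, namely $q^2$ points with lines of size $q$ and every pair on a unique line, is an affine plane of order $q$, and its parallel classes are the $\cF_i^b$.

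The main obstacle is the containment $B\subseteq\Om_b$, that is, ruling out points of a nonreference block lying outside every color-$1$ block. The blocker lemma handles this directly. The only check needed is that Lemma~\ref{lem:extend} applies to the one-element matching $\{B\}$ with color~$1$ forbidden, which it does since $1\le t$.
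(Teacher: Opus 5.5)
Your proof is correct and follows the paper's argument: packets $\Om_b$ built from one reference color, containment of every label-$b$ block in $\Om_b$, each $\cF_j^b$ a parallel class, and the pair count $q(q+1)\binom q2=\binom{q^2}2$ giving an $S(2,q,q^2)$. The one difference is that you obtain $B\subseteq\Om_b$ through Lemmas~\ref{lem:extend} and~\ref{lem:perfect-blocker}, whereas the paper argues directly that $B$ meets each of the $q$ pairwise disjoint members of $\cF_1^b$ in exactly one point, so these $q$ distinct points already exhaust the $q$-set $B$; the step you call the main obstacle follows immediately from facts you had already listed.
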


\begin{proof}
Fix \(b\) and a color \(i\).  The \(q\) members of \(\cF_i^b\) are
disjoint \(q\)-sets, so their union \(\Om_b\) has size \(q^2\).
If \(j\ne i\) and \(B\in\cF_j^b\), then \(B\) meets every member of
\(\cF_i^b\).  These \(q\) intersections are distinct and exhaust
\(B\), so \(B\subseteq\Om_b\).  Thus every \(\cF_j^b\) is a parallel
class partitioning the same point set.

There are \(q(q+1)\) blocks.  No pair of points occurs twice, and the
number of covered pairs is
\[
                         q(q+1)\binom q2=\binom{q^2}2.
\]
Every pair is therefore covered once.  The blocks form an
\(S(2,q,q^2)\), equivalently an affine plane of order \(q\).
Different labels are disjoint by Lemma \ref{lem:common-label}.
\end{proof}

\section{Reciprocal localization and equality}\label{sec:localization}

The following reciprocal-degree localization is used in the stability
argument and is included to keep the crown analysis self-contained.

\begin{lemma}[Greedy crown lemma]\label{lem:greedy-crown}
Let \(G\) be a linear \(r\)-uniform hypergraph.  Suppose that an edge
\(e\) contains distinct vertices \(v_1,\ldots,v_k\) satisfying
\[
        d_G(v_i)\ge(i-1)(r-1)+2\qquad(i\in[k]).
\]
Then \(e\) is the base of a copy of \(C_{1,k}^r\).
\end{lemma}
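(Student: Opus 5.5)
We build the crown greedily in reverse order of the degree thresholds, choosing a petal at \(v_k\) first and a petal at \(v_1\) last. Equivalently, by Proposition~\ref{prop:crown-rainbow}, we look for a rainbow matching in the petal families \(\cF_1(e),\ldots,\cF_k(e)\), where \(\cF_i(e)\) has color \(i\). These families have sizes
\[
x_i=d_G(v_i)-1\ge (i-1)q+1 \qquad (q=r-1).
\]

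The procedure is as follows. For \(j=k,k-1,\ldots,1\), suppose the petals \(A_k,\ldots,A_{j+1}\) have already been chosen. They are pairwise disjoint, and \(A_m\in\cF_m(e)\). We now need a member of \(\cF_j(e)\) disjoint from all of them.

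The count uses Lemma~\ref{lem:trace-linear}. Since \(\cF_j(e)\) is a matching of \(q\)-sets, each already chosen \(q\)-set \(A_m\) meets at most \(q\) members of \(\cF_j(e)\): the members are pairwise disjoint, and each point of \(A_m\) lies in at most one of them. The \(k-j\) chosen petals therefore block at most \((k-j)q\) members of \(\cF_j(e)\).

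Here the naive ordering fails. At step \(j\) (processing from \(k\) down) we need \(x_j>(k-j)q\), but \(x_j\) is only about \((j-1)q\), which goes the wrong way. So we must instead process in increasing order, \(j=1,2,\ldots,k\), and choose \(A_j\) after \(A_1,\ldots,A_{j-1}\). Then only \(j-1\) petals are already chosen, and they block at most \((j-1)q\) members of \(\cF_j(e)\). Since
\[
x_j\ge (j-1)q+1>(j-1)q,
\]
an unblocked member \(A_j\in\cF_j(e)\) exists. It is disjoint from every earlier petal, so \(\{A_1,\ldots,A_j\}\) remains a rainbow matching. At \(j=1\) nothing is blocked and \(x_1\ge1\) suffices.

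After \(k\) steps we have a rainbow matching of size \(k\), and Proposition~\ref{prop:crown-rainbow} turns it into a copy of \(C^r_{1,k}\) with base \(e\). Concretely, each \(A_j\cup\{v_j\}\) is a petal edge meeting \(e\) only at \(v_j\), and the petals are pairwise disjoint.

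The only delicate point is the blocking bound: an outside \(q\)-set meets at most \(q\) members of a matching. The \(r-1\) bound is sharp, which explains the thresholds \((i-1)(r-1)+2\). Choosing the increasing order of thresholds makes the counting work with no slack needed, and no further obstacle arises.
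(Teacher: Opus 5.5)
Your argument is correct and is essentially the paper's proof: you choose petals in the order \(v_1,\ldots,v_k\), and each earlier petal blocks at most \(q=r-1\) candidates at \(v_j\) because those candidates form a matching, so \(x_j\ge(j-1)q+1\) leaves a free choice (the paper phrases this blocking count directly via linearity rather than through \(\cF_j(e)\) and Proposition~\ref{prop:crown-rainbow}). Delete the opening sentence announcing a reverse-order construction, since, as you yourself observe, that ordering fails and the proof you actually give uses increasing order.
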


\begin{proof}
Choose the petals in the order \(v_1,\ldots,v_k\).  Suppose that
pairwise disjoint petals \(f_1,\ldots,f_{i-1}\) have been chosen, with
\(f_j\cap e=\{v_j\}\).  There are at least
\((i-1)(r-1)+1\) candidate edges other than \(e\) through \(v_i\).
For fixed \(j<i\), the edge \(f_j\) blocks at most \(r-1\) candidates:
each candidate meeting \(f_j\) uses one of the \(r-1\) points of
\(f_j\setminus\{v_j\}\), and linearity permits at most one candidate
through \(v_i\) and any prescribed such point.  The previously chosen
petals block at most \((i-1)(r-1)\) candidates.  A candidate remains,
and induction gives the required crown.
\end{proof}

\begin{lemma}[Degree witness]\label{lem:degree-witness}
Let \(H\) be linear and \(C_{1,k}^r\)-free.  If
\(e=\{u_1,\ldots,u_r\}\) is ordered so that
\(d_H(u_1)\ge\cdots\ge d_H(u_r)\), then some \(i\in[k]\) satisfies
\[
              d_H(u_i)\le(k-i)(r-1)+1.
\]
\end{lemma}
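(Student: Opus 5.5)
The proof is by contraposition from Lemma \ref{lem:greedy-crown}. Suppose, for a contradiction, that every \(i\in[k]\) satisfies
\[
d_H(u_i)\ge (k-i)(r-1)+2 .
\]
The plan is to feed the vertices \(u_1,\ldots,u_k\) into the greedy crown lemma in reverse order. Put
\[
v_j:=u_{k+1-j}\qquad(j\in[k]).
\]
The substitution \(i=k+1-j\) gives \(k-i=j-1\). The assumed bound therefore reads
\[
d_H(v_j)\ge (j-1)(r-1)+2\qquad(j\in[k]),
\]
which is exactly the hypothesis of Lemma \ref{lem:greedy-crown}. The vertices \(v_1,\ldots,v_k\) are distinct points of \(e\), since they are distinct labels \(u_i\) of points of \(e\). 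Also \(k\le r\), so the indices are valid.

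Lemma \ref{lem:greedy-crown} applies with \(G=H\), which is linear. It produces a copy of \(C^r_{1,k}\) with base \(e\), contradicting the assumption that \(H\) is \(C^r_{1,k}\)-free. Hence some \(i\in[k]\) has \(d_H(u_i)\le (k-i)(r-1)+1\).

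The monotone ordering \(d_H(u_1)\ge\cdots\ge d_H(u_r)\) is not needed for this implication. It only makes the statement sharp and ensures the witness is the natural one for later use. There is no real obstacle; the only care point is the index reversal, because the greedy lemma wants the smallest degree requirement first.
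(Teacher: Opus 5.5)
Your proposal is correct and follows the paper's proof exactly: negate the conclusion, reindex by \(v_j=u_{k+1-j}\) so the hypothesis of Lemma~\ref{lem:greedy-crown} holds, and derive a crown with base \(e\). Your side remark is also right: the contradiction never uses the monotone ordering of the degrees.
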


\begin{proof}
Otherwise set \(v_j=u_{k-j+1}\) for \(j\in[k]\).  The reversed vertices satisfy the degree condition in Lemma~\ref{lem:greedy-crown}, a contradiction.
\end{proof}

\begin{proposition}[Edgewise localization]\label{prop:localization}
Every edge \(e\) of a linear \(C_{1,k}^r\)-free \(r\)-graph satisfies
\[
                         \Phi_H(e)\ge\frac rD.
\]
Equality holds if and only if every vertex of \(e\) has degree \(D\).
\end{proposition}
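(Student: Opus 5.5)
The plan is to combine Lemma~\ref{lem:degree-witness} with a one-variable comparison of fractions. Order the vertices of \(e\) as \(u_1,\ldots,u_r\) with \(d_H(u_1)\ge\cdots\ge d_H(u_r)\). Since \(H\) is linear and \(C^r_{1,k}\)-free, Lemma~\ref{lem:degree-witness} gives an index \(i\in[k]\) with \(d_H(u_i)\le (k-i)(r-1)+1=(t+1-i)q+1\). By monotonicity of the ordering, every \(u_j\) with \(j\ge i\) has degree at most this value. All degrees are at least \(1\), since \(e\) is an edge. Discarding the first \(i-1\) terms therefore gives
\[
\Phi_H(e)\ \ge\ \sum_{j=i}^{r}\frac1{d_H(u_j)}\ \ge\ \frac{r-i+1}{(t+1-i)q+1}.
\]

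Next put \(s=i-1\in\{0,\ldots,t\}\) and compare \(\frac{q+1-s}{(t-s)q+1}\) with \(\frac{q+1}{tq+1}\). Both denominators are positive, so cross-multiplying reduces the desired inequality to
\[
(q+1-s)(tq+1)\ \ge\ (q+1)\bigl((t-s)q+1\bigr).
\]
Expanding, this is equivalent to
\[
s\bigl(q(q+1-t)-1\bigr)\ \ge\ 0.
\]
The standing assumption \(k\le r\) gives \(t\le q\), so \(q(q+1-t)\ge q\). Since \(r\ge3\), we have \(q\ge2\), and hence \(q(q+1-t)-1\ge1>0\). So the inequality holds, and it is strict whenever \(s\ge1\). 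This yields \(\Phi_H(e)\ge r/D\).

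For the equality case, suppose first that the witness has \(s\ge1\). Then the bound above is strict, so \(\Phi_H(e)>r/D\). If instead the only available witness is \(i=1\), then \(d_H(u_1)\le tq+1=D\), and hence every vertex of \(e\) has degree at most \(D\). In that case \(\Phi_H(e)=\sum_j 1/d_H(u_j)\ge r/D\), with equality exactly when every \(d_H(u_j)=D\). The two cases together show that equality forces all degrees on \(e\) to equal \(D\). Conversely, if all degrees on \(e\) equal \(D\), then trivially \(\Phi_H(e)=r/D\).

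The only delicate point is strictness in the case \(s\ge1\). It relies on \(q(q+1-t)>1\), which uses both \(t\le q\) (from \(k\le r\)) and \(q\ge2\) (from \(r\ge 3\)); I would check these two hypotheses explicitly.
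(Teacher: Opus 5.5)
Your proof is correct and takes the same route as the paper: order by degree, use the degree witness to get \(\Phi_H(e)\ge (r-i+1)/((k-i)(r-1)+1)\), and observe that the gap has numerator \((i-1)\bigl(q(q+1-t)-1\bigr)\). This is exactly the paper's \((i-1)\{r(r-k)+k-2\}\) after substituting \(r=q+1\), \(k=t+1\); cross-multiplying instead of subtracting is only cosmetic, and your equality analysis (a witness with \(i\ge2\) is strict, while \(i=1\) caps all degrees at \(D\)) matches the paper's.
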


\begin{proof}
Order \(e=\{u_1,\ldots,u_r\}\) by decreasing degree, take the witness
\(i\) in Lemma \ref{lem:degree-witness}, and put
\(B_i=(k-i)(r-1)+1\).  Then \(d_H(u_j)\le B_i\) for
\(j\ge i\), whence
\[
             \Phi_H(e)\ge\frac{r-i+1}{B_i}.
\]
A direct subtraction gives
\[
 \frac{r-i+1}{B_i}-\frac rD
 =\frac{(i-1)\{r(r-k)+k-2\}}{B_i D}.
\]
The numerator in braces is nonnegative because
\(r(r-k)+k-2\ge r-2>0\).  This proves the required inequality.

Equality forces \(i=1\) in the displayed gap identity.  Then
\(d_H(u_1)\le D\), hence every degree on \(e\) is at most \(D\), and
\(\sum_{u\in e}1/d_H(u)=r/D\) forces all of them to equal \(D\).
The converse is immediate.
\end{proof}

For an edge \(e\), recall
\[
 \delta_H(e)=\Phi_H(e)-\frac rD,
 \qquad
 L(e)=\sum_{v\in e}(D-d_H(v)).
\]
The summands defining \(L(e)\) will only be used after nonnegativity
has been established.  Put
\[
 A_{q,t}=q^2-(t-1)q-1,
 \qquad
 G_{q,t}=\frac{A_{q,t}}{DD^-}.
\]
Since \(q\ge t\), one has \(A_{q,t}>0\).

\begin{proposition}[Degree stability]\label{prop:degree-stability}
If \(\delta_H(e)<G_{q,t}\), then every vertex of \(e\) has degree at
most \(D\), and
\[
                         L(e)\le D^2\delta_H(e).
\]
\end{proposition}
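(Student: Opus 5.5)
The plan is to rerun the proof of Proposition~\ref{prop:localization} and observe that the gap identity there is already quantitative. Order \(e=\{u_1,\ldots,u_r\}\) by decreasing degree, take the witness \(i\in[k]\) from Lemma~\ref{lem:degree-witness}, and put \(B_i=(k-i)(r-1)+1\). As in that proof,
\[
 \delta_H(e)\ge\frac{r-i+1}{B_i}-\frac rD=\frac{(i-1)\{r(r-k)+k-2\}}{B_iD}.
\]
The first step is to rewrite the brace in the present parameters. With \(r=q+1\) and \(k=t+1\),
\[
 r(r-k)+k-2=(q+1)(q-t)+t-1=q^2-(t-1)q-1=A_{q,t}.
\]

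The second step shows that any witness \(i\ge2\) forces \(\delta_H(e)\ge G_{q,t}\). For \(i=2\) we have \(B_2=(k-2)(r-1)+1=(t-1)q+1=D^-\), so the lower bound equals \(A_{q,t}/(DD^-)=G_{q,t}\) exactly. For \(i>2\), the factor \(i-1\) is larger and \(B_i\le B_2\), so the bound is at least \(G_{q,t}\). This uses \(A_{q,t}>0\) and \(B_i\ge1\) for \(i\le k\). Hence the hypothesis \(\delta_H(e)<G_{q,t}\) forces the witness to be \(i=1\). Then \(d_H(u_1)\le B_1=tq+1=D\), and since \(u_1\) has the largest degree, every vertex of \(e\) has degree at most \(D\).

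The third step is the bound on \(L(e)\). Each summand \(D-d_H(v)\) is now nonnegative, and \(1\le d_H(v)\le D\) because \(v\) lies in \(e\). Therefore
\[
 \frac1{d_H(v)}-\frac1D=\frac{D-d_H(v)}{d_H(v)\,D}\ge\frac{D-d_H(v)}{D^2}.
\]
Summing over \(v\in e\) gives \(\delta_H(e)\ge L(e)/D^2\), which is the claim.

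The only point needing care is the comparison across witnesses in the second step. The monotonicity \(B_i\le B_2\) together with the growing factor \(i-1\) settles it directly. Beyond that, everything is the identity check \(r(r-k)+k-2=A_{q,t}\) and \(B_2=D^-\), so I expect no real obstacle.
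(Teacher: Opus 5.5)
Your proposal is correct and follows the paper's proof essentially step for step. You identify the brace in the gap identity as \(A_{q,t}\), show that any witness \(i\ge2\) forces \(\delta_H(e)\ge G_{q,t}\), conclude \(i=1\) so that all degrees are at most \(D\), and then sum the termwise bound \(1/d-1/D\ge(D-d)/D^2\). The only cosmetic difference is that for \(i\ge2\) you argue via \(B_i\le B_2=D^-\) and the growing factor \(i-1\), whereas the paper phrases the same comparison as monotonicity of the ratio \((i-1)/((t+1-i)q+1)\).
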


\begin{proof}
For a witness of index \(i\ge2\), the displayed gap identity, with
\(r=q+1\) and \(k=t+1\), gives
\[
 \delta_H(e)\ge
 \frac{(i-1)A_{q,t}}{((t+1-i)q+1)D}.
\]
The ratio \((i-1)/((t+1-i)q+1)\) is strictly increasing, so its
minimum for \(i\ge2\) occurs at \(i=2\) and equals \(1/D^-\).
Thus \(\delta_H(e)<G_{q,t}\) forces a witness of index one.  As in the
equality proof, all degrees on \(e\) are then at most \(D\).
For \(d\le D\),
\[
                   \frac1d-\frac1D
                   =\frac{D-d}{Dd}\ge\frac{D-d}{D^2}.
\]
Summing over \(e\) proves the claim.
\end{proof}

\begin{theorem}[Equality structure]\label{thm:sharp-neighborhood}
Let \(e\) be an edge of a linear \(C_{1,t+1}^{q+1}\)-free hypergraph.
The following are equivalent:
\begin{enumerate}
\item \(\delta_H(e)=0\);
\item every point of \(e\) has degree \(D\);
\item the petal trace at \(e\) is a disjoint union of \(t\) affine
      planes of order \(q\), with the colors as parallel classes.
\end{enumerate}
For each affine packet, adjoining the base point \(v_i\) to every line
of color \(i\), and adjoining the base edge \(e\), produces a projective
plane of order \(q\).  The resulting \(t\) projective planes have common
line \(e\) and are otherwise disjoint.
\end{theorem}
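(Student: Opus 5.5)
The plan is to prove the cycle of implications (1)\(\Leftrightarrow\)(2)\(\Rightarrow\)(3)\(\Rightarrow\)(2), and then to verify the projective-plane statement directly.

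\textbf{(1)\(\Leftrightarrow\)(2).} This is exactly the equality clause of Proposition~\ref{prop:localization}. If every degree on \(e\) equals \(D\), then \(\Phi_H(e)=r/D\), which is (1). Conversely, (1) forces all degrees on \(e\) to equal \(D\).

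\textbf{(2)\(\Rightarrow\)(3).} Assume (2). Then every petal family \(\cF_i(e)\) has size \(x_i=D-1=tq\). By Lemma~\ref{lem:trace-linear}, each \(\cF_i(e)\) is a matching of \(q\)-sets, and sets of different colors meet in at most one point. Since \(H\) is \(C^{q+1}_{1,t+1}\)-free, Proposition~\ref{prop:crown-rainbow} shows that there is no rainbow matching of size \(t+1\). These are precisely the hypotheses of the abstract subsection, with \(r=q+1\ge t+1\), which is needed in Lemma~\ref{lem:extend}. Theorem~\ref{thm:colored-plane} then gives the decomposition into \(t\) disjoint affine planes \(\Om_1,\ldots,\Om_t\), with the colors \(\cF_i^b\) as parallel classes.

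\textbf{(3)\(\Rightarrow\)(2).} Each affine plane of order \(q\) has exactly \(q\) lines in each of its \(q+1\) parallel classes. Color \(i\) therefore contributes \(tq\) petals in total, so \(d_H(v_i)=1+tq=D\).

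\textbf{Projective planes.} Fix a packet \(b\). The point set is \(\Om_b\cup e\), which has \(q^2+q+1\) points. The lines are \(\{A\cup\{v_i\}:A\in\cF_i^b\}\) together with \(e\), giving \(q(q+1)+1=q^2+q+1\) lines of size \(q+1\). I would check that every pair of points lies on exactly one line, in three cases.
\begin{itemize}
\item Two points of \(\Om_b\) lie on a unique affine line \(A\), and \(A\) has exactly one extension \(A\cup\{v_i\}\). Neither point lies on \(e\), and the other extended lines restrict to other affine lines.
\item A point \(x\in\Om_b\) and a base point \(v_i\) lie together only on the extension of the unique line of the parallel class \(\cF_i^b\) through \(x\).
\item Two base points lie on \(e\) only, since lines of color \(i\) contain no base point other than \(v_i\).
\end{itemize}
This makes the structure an \(S(2,q+1,q^2+q+1)\), that is, a projective plane of order \(q\). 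The \(t\) planes share the line \(e\). Their affine parts \(\Om_b\) are pairwise disjoint, and their remaining lines are distinct hyperedges of \(H\). Hence the planes meet only in the points of \(e\), which is the sense of ``otherwise disjoint''.

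\textbf{Main obstacle.} The only step with real content is (2)\(\Rightarrow\)(3), and that work is already done in Theorem~\ref{thm:colored-plane}. What remains is careful bookkeeping: confirming that the hypotheses of that theorem hold, namely the sizes \(tq\), the absence of a rainbow \((t+1)\)-matching, and the standing assumption \(q\ge t\).
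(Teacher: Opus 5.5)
Your proposal is correct and follows the paper's proof. You get (1)\(\Leftrightarrow\)(2) from Proposition~\ref{prop:localization}, (2)\(\Rightarrow\)(3) from Proposition~\ref{prop:crown-rainbow} together with Theorem~\ref{thm:colored-plane}, and (3)\(\Rightarrow\)(2) by counting \(tq\) lines per color. The only difference is cosmetic: you verify the projective plane by checking that every pair of points lies on exactly one line, whereas the paper checks how the lines meet, and both checks are complete.
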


\begin{proof}
The equivalence of the first two statements is
Proposition~\ref{prop:localization}.  If every point of \(e\) has
degree \(D\), then every color has \(D-1=tq\) petals.  By
Proposition~\ref{prop:crown-rainbow}, crown-freeness excludes a rainbow
matching of size \(t+1\), and Theorem~\ref{thm:colored-plane} gives the
affine-plane decomposition.  Conversely, the decomposition supplies
\(tq\) petals of every color, so every point of \(e\) has degree \(D\).

Fix one affine packet.  Add the \(q+1\) points of \(e\) as points at
infinity, assign \(v_i\) to the parallel class of color \(i\), and take
\(e\) as the line at infinity.  Lines in one affine class meet at their
assigned point, lines in distinct classes retain their unique affine
intersection, and every affine point lies on one line of each class.
This is a projective plane of order \(q\).  The affine packets are
disjoint, so the projective planes intersect exactly in \(e\).
\end{proof}
\section{Approximate blocking near equality}\label{sec:approx}

Assume throughout this section that \(e=\{v_1,\ldots,v_r\}\) satisfies
the hypothesis of Proposition \ref{prop:degree-stability}.  Write
\[
 \Delta_i=D-d_H(v_i),
 \qquad
 x_i=|\cF_i(e)|=tq-\Delta_i,
 \qquad
 \sum_{i=1}^r\Delta_i=L(e).
\]
Thus all \(\Delta_i\) are nonnegative integers.

\begin{proposition}[Approximate blocking]
\label{prop:approx-blocking}
Suppose
\begin{equation}\label{eq:approx-threshold}
 L(e)<q(q-t+2).
\end{equation}
Then the trace at \(e\) has a maximum rainbow matching
\(\mathcal M=\{A_1,\ldots,A_t\}\) of size exactly \(t\).  Put
\(U=A_1\cup\cdots\cup A_t\).  For every color \(i\) unused by
\(\mathcal M\):
\begin{enumerate}
\item at most \(\Delta_i\) members of \(\cF_i(e)\) meet at least two
      branches \(A_b\);
\item at most \(\Delta_i\) points of \(U\) are uncovered by color
      \(i\);
\item for every \(b\in[t]\), at least \(q-2\Delta_i\) color-\(i\)
      rows have trace on \(U\) equal to a singleton in \(A_b\);
\item singleton rows of two distinct unused colors with the same branch
      label intersect;
\item if \(B\) is a singleton row of label \(b\) and \(j\ne i\) is
      another unused color, then \(B\) meets at most \(2\Delta_j\)
      color-\(j\) rows not having singleton label \(b\).
\end{enumerate}
\end{proposition}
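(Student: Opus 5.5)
The plan is to first obtain a rainbow matching of size exactly \(t\) by a greedy argument in the style of Lemma~\ref{lem:extend}. Then I prove items 1--3 by counting incidences with \(U\), as in Proposition~\ref{prop:blocker} and Lemma~\ref{lem:perfect-blocker}. Items 4 and 5 follow by exchanging a single branch of \(\mathcal M\) and invoking Proposition~\ref{prop:crown-rainbow}. Throughout, \(x_i=tq-\Delta_i\), and the trace \(\cF_1(e),\dots,\cF_r(e)\) has the properties of Lemma~\ref{lem:trace-linear}.

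\emph{Size of \(\mathcal M\).} Since \(H\) is \(C^{q+1}_{1,t+1}\)-free, Proposition~\ref{prop:crown-rainbow} shows that every rainbow matching has size at most \(t\). For the lower bound, suppose a rainbow matching has size \(h<t\). There are \(r-h\ge q+1-(t-1)=q-t+2\) unused colors. If every unused color had \(\Delta_i\ge q\), then \(L(e)\ge q(q-t+2)\), contradicting \eqref{eq:approx-threshold}. So some unused color \(i\) has \(\Delta_i<q\le(t-h)q\), and hence \(x_i>hq\). Each of the \(h\) chosen \(q\)-sets meets at most \(q\) members of the matching \(\cF_i(e)\). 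Therefore some member of \(\cF_i(e)\) is free of all chosen sets, and we add it. Iterating this step reaches size \(t\), so a maximum rainbow matching has size exactly \(t\). The arguments below apply to every rainbow matching of size \(t\), since each such matching is maximum.

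\emph{Items 1--3.} Fix an unused color \(i\). A color-\(i\) row \(A\) meets each branch \(A_b\) in at most one point, because the colors differ. So \(|A\cap U|\) equals the number of branches that \(A\) meets. Moreover \(|A\cap U|\ge1\), by maximality of \(\mathcal M\). The color-\(i\) rows are disjoint and \(|U|=tq\), which gives
\[
 x_i+\sum_{A\in\cF_i(e)}\bigl(|A\cap U|-1\bigr)=\sum_{A}|A\cap U|\le tq .
\]
Hence the excess \(\sum_A(|A\cap U|-1)\) is at most \(\Delta_i\), which bounds the number of rows meeting two or more branches; this is item 1. The number of uncovered points of \(U\) is \(tq-\sum_A|A\cap U|\le tq-x_i=\Delta_i\); this is item 2. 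For item 3, at least \(q-\Delta_i\) points of \(A_b\) are covered by color \(i\). Each covering row is either a singleton row in \(A_b\) or one of at most \(\Delta_i\) multi-branch rows. A multi-branch row covers at most one point of \(A_b\), so at least \(q-2\Delta_i\) singleton rows have label \(b\).

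\emph{Items 4 and 5, the exchange step.} I expect this step to require the most care. Let \(B\in\cF_i(e)\) be a singleton row of label \(b\). Then \(B\) is disjoint from every \(A_c\) with \(c\ne b\), and its color \(i\) is unused. Consequently \(\mathcal M'=\mathcal M-A_b+B\) is again a rainbow \(t\)-matching, hence maximum.

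For item 4, let \(C\) be a color-\(j\) singleton row of label \(b\), where \(j\ne i\) is also unused. If \(C\) were disjoint from \(B\), then \(\mathcal M'\cup\{C\}\) would be a rainbow matching of size \(t+1\). This contradicts Proposition~\ref{prop:crown-rainbow}.

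For item 5, the color \(j\) remains unused by \(\mathcal M'\), so item 1 applied to \(\mathcal M'\) yields at most \(\Delta_j\) color-\(j\) rows meeting two branches of \(\mathcal M'\). Now take a color-\(j\) row meeting \(B\) that is not a singleton row of label \(b\). Its trace on \(U\) is nonempty, so either it meets at least two branches of \(\mathcal M\), or it is a singleton row of some label \(c\ne b\). There are at most \(\Delta_j\) rows of the first kind, by item 1 for \(\mathcal M\). A row of the second kind meets both \(A_c\) and \(B\), which are two distinct branches of \(\mathcal M'\). So there are at most \(\Delta_j\) such rows. Together this gives at most \(2\Delta_j\) rows, which is item 5.
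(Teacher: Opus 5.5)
Your proof is correct. Items 1--4 match the paper's argument almost verbatim, but you take a different route in two places.

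\textbf{Size of \(\mathcal M\).} The paper applies the matching-blocker inequality (Proposition~\ref{prop:blocker}) to a maximal matching of size \(h<t\). This gives the quantitative bound \(L(e)\ge (t-h)q(r-h)\ge q(q-t+2)\). You instead use pigeonhole to find one unused color with \(\Delta_i<q\), and then extend greedily as in Lemma~\ref{lem:extend}. Both reach the same threshold. The paper's version reuses an earlier tool and records how large \(L(e)\) must be for a given shortfall \(t-h\). Yours is more elementary and constructive.

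\textbf{Item~5.} The paper counts points inside \(B\). By items~3 and~4, at least \(q-2\Delta_j\) color-\(j\) singleton rows of label \(b\) all meet \(B\), at distinct points, so at most \(2\Delta_j\) points of \(B\) remain for other rows. You bypass items~3 and~4 and apply item~1 twice:
\begin{itemize}
\item once to \(\mathcal M\), which bounds the multi-branch rows by \(\Delta_j\);
\item once to the swapped matching \(\mathcal M'=\mathcal M-A_b+B\), where a singleton row of label \(c\ne b\) that meets \(B\) becomes a two-branch row.
\end{itemize}
This second application is legitimate precisely because, as you note, every rainbow \(t\)-matching is maximum. Your split gives slightly finer information, since the \(\Delta_j\) bound on multi-branch rows does not depend on \(B\). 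The paper's point count is shorter once items~3 and~4 are available.
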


\begin{proof}
Let \(h\le t\) be the size of a maximum rainbow matching and let
\(U_{\mathcal M}\) be its union.  The unused colors contain at least
\[
             (r-h)tq-L(e)
\]
rows.  Proposition \ref{prop:blocker}, with
\(|U_{\mathcal M}|=hq\), bounds this number above by \(hq(r-h)\).
If \(h<t\), then
\[
                    L(e)\ge(t-h)q(r-h)
                          \ge q(q-t+2),
\]
contrary to \eqref{eq:approx-threshold}.  Hence \(h=t\).

Fix an unused color \(i\).  Maximality makes every one of its
\(tq-\Delta_i\) rows meet \(U\).  Since the color is a matching, its
row traces on the \(tq\)-point set \(U\) are disjoint.  Therefore
\[
 \sum_{B\in\cF_i(e)}(|B\cap U|-1)\le\Delta_i.
\]
This proves part 1.  The nonempty disjoint traces cover at least
\(tq-\Delta_i\) points, proving part 2.  On a fixed branch \(A_b\), at
most \(\Delta_i\) points are uncovered and at most \(\Delta_i\) more
belong to nonsingleton traces.  Every remaining point lies in a
distinct singleton row of label \(b\), proving part 3.

For part 4, suppose singleton rows \(B,C\) of distinct unused colors
and common label \(b\) were disjoint.  Then
\[
              \{B,C\}\cup\{A_a:a\ne b\}
\]
would be a rainbow matching of size \(t+1\), a contradiction.  Finally,
part 3 supplies at least \(q-2\Delta_j\) color-\(j\) singleton rows of
label \(b\).  Part 4 makes all of them meet \(B\), at distinct points
because color \(j\) is a matching.  At most \(2\Delta_j\) points of
\(B\) remain available to rows of color \(j\) with another trace type.
This proves part 5.
\end{proof}

\section{Uncovered pairs}\label{sec:pairs}

Let \(\mathcal R\) be a linear family of sets, let \(\Om\) be a point
set, and write \(s_B=|B\cap\Om|\).  Let \(M_\Om\) be the incidence
matrix of the restricted rows \(B\cap\Om\), and let \(d_\Om(x)\) be
the corresponding column degree.  Define
\[
 \cU(\Om)=\binom{|\Om|}{2}-\sum_{B\in\mathcal R}\binom{s_B}{2}.
\]

\begin{proposition}[Pair-defect identity]\label{prop:pair-defect}
One has \(\cU(\Om)\ge0\) and
\[
\boxed{
 \cU(\Om)=\binom{|\Om|}{2}
 -\frac12\left\|M_\Om^{\mathsf T}M_\Om-
 \operatorname{diag}(d_\Om(x):x\in\Om)\right\|_F^2.}
\]
Suppose that the rows in \(\mathcal R\) arise from hyperedges in a
linear hypergraph.  If every hyperedge \(g\) in a family \(\mathcal E\)
is distinct from those row-edges, then
\[
 \#\{g\in\mathcal E:|g\cap\Om|\ge h\}
 \le\frac{\cU(\Om)}{\binom h2}\qquad(h\ge2),
\]
provided every pair of \(g\cap\Om\) is required to be uncovered by
\(\mathcal R\).  This requirement holds, in particular, when the
row-edges all meet a fixed edge \(e\) and the members of \(\mathcal E\)
are disjoint from \(e\).
\end{proposition}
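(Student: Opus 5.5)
The plan is to interpret \(\sum_B\binom{s_B}{2}\) as a count of point pairs inside \(\Om\), and then read off all three assertions from that interpretation. Since \(\mathcal R\) is linear, two distinct rows share at most one point. So every unordered pair \(\{x,y\}\subseteq\Om\) lies in at most one restricted row \(B\cap\Om\). The map sending a pair inside some \(B\cap\Om\) to that pair is therefore injective. This gives \(\sum_B\binom{s_B}{2}\le\binom{|\Om|}{2}\), i.e.\ \(\cU(\Om)\ge0\), and \(\cU(\Om)\) is exactly the number of pairs of \(\Om\) covered by no row.

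For the Frobenius identity, I would compute \(N=M_\Om^{\mathsf T}M_\Om\) entrywise. The entry \(N_{xy}\) is the number of rows containing both \(x\) and \(y\), and the diagonal entry \(N_{xx}\) equals \(d_\Om(x)\). After subtracting the diagonal, the off-diagonal entries are \(0/1\) by linearity, so their squares equal themselves. The squared Frobenius norm is then \(2\) times the number of covered unordered pairs. Covered pairs are counted by \(\sum_B\binom{s_B}{2}\), again by the injectivity above. Halving and subtracting from \(\binom{|\Om|}{2}\) yields the boxed formula. (Without linearity one would get \(\sum_{x\neq y}N_{xy}^2\) rather than \(\sum_{x\ne y}N_{xy}\), which is why linearity is used here too.)

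For the counting bound, each \(g\in\mathcal E\) with \(|g\cap\Om|\ge h\) contains at least \(\binom h2\) pairs of \(\Om\), all of them uncovered by \(\mathcal R\) by the stated requirement. Two distinct such \(g,g'\) are distinct hyperedges of a linear hypergraph, so they share at most one point and hence no pair. The sets of pairs contributed by different \(g\) are therefore disjoint subsets of the \(\cU(\Om)\) uncovered pairs. Summing gives \(\binom h2\cdot\#\{g\}\le\cU(\Om)\).

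The only step needing care is the final sufficient condition. Suppose a pair \(\{x,y\}\subseteq g\cap\Om\) were covered by a row \(B\cap\Om\), with \(B\) coming from a row-edge \(f\). Then \(g\) and \(f\) share the two points \(x,y\). Here \(g\ne f\), because \(f\) meets \(e\) and \(g\) is disjoint from \(e\). This contradicts linearity. The only subtlety is that a row \(B\) may be \(f\setminus\{v_i\}\) rather than \(f\) itself; this is harmless because \(B\subseteq f\). I expect no genuine obstacle, just bookkeeping about rows versus edges.
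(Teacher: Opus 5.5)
Your proposal is correct and follows essentially the same route as the paper's proof. Linearity makes the covered pairs contributed by distinct rows disjoint, which gives nonnegativity and the off-diagonal \(0/1\) Frobenius count; external edges consume disjoint sets of \(\binom h2\) uncovered pairs; and the special case reduces to a two-point intersection. You also state explicitly two points the paper leaves implicit: that \(g\ne f\) because \(f\) meets \(e\) while \(g\) does not, and that \(B\subseteq f\).
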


\begin{proof}
Linearity makes the two-point subsets contributed by distinct
restricted rows disjoint, proving nonnegativity.  Every off-diagonal
entry of \(M_\Om^{\mathsf T}M_\Om\) is zero or one, and it equals one
precisely on a covered ordered pair.  Its diagonal is \(d_\Om(x)\), so
the squared Frobenius norm in the displayed identity is twice the number of covered
unordered pairs.

If all pairs in \(g\cap\Om\) are uncovered, then \(g\) consumes at
least \(\binom h2\) uncovered pairs.  Distinct hyperedges consume
disjoint pairs, since otherwise they share two points.  This proves the claimed bound.  In the stated special case, a pair contained both in \(g\) and
in a row-edge would make those two hyperedges intersect twice.
\end{proof}

\section{Packet construction and uncovered pairs}\label{sec:packets}

Assume the hypotheses of Proposition~\ref{prop:approx-blocking}, and
fix a maximum rainbow matching
\(\mathcal M=\{A_1,\ldots,A_t\}\).  Let \(J\) be the set of unused
colors, so
\[
 p_0:=|J|=q+1-t.
\]
For \(i\in J\) and \(b\in[t]\), let \(\cF_i^b\) be the rows of color
\(i\) whose trace on \(A_1\cup\cdots\cup A_t\) is a singleton in
\(A_b\).  Put
\[
 \xi=\frac{L(e)}{q^2}.
\]

\begin{theorem}[Packet construction and uncovered pairs]
\label{thm:least-deficient}
Suppose \(p_0\ge2\).  Choose \(p,c\in J\) with the two smallest
values of \(\Delta_i\), and define
\[
 w=2(\Delta_p+\Delta_c),\qquad
 \rho=\frac wq.
\]
Then
\begin{equation}\label{eq:rho-bound}
 \rho\le\frac{4L(e)}{q(q+1-t)}.
\end{equation}
Assume \(\rho<1\), and for \(b\in[t]\) set
\[
 \Om_b=\{R\cap C:R\in\cF_p^b,\ C\in\cF_c^b\}.
\]
The sets \(\Om_1,\ldots,\Om_t\) are pairwise disjoint and satisfy:
\begin{enumerate}
\item
\[
 |\Om_b|\ge(1-\rho)q^2;
\]
\item every unused-color singleton row of label \(b\) contains at least
      \((1-\rho)q\) points of \(\Om_b\);
\item if
      \[
      \mathcal R_b=\bigcup_{i\in J}\cF_i^b,
      \]
      then
      \[
      |\mathcal R_b|\ge q^2-(t-1)q-2L(e);
      \]
\item with \(s_B=|B\cap\Om_b|\), define
      \[
      \cU_b=\binom{|\Om_b|}{2}
      -\sum_{B\in\mathcal R_b}\binom{s_B}{2}.
      \]
      Then
      \[
      0\le\cU_b\le\frac{\eta_*q^4}{2},
      \qquad
      \eta_*=\frac tq+2\xi+2\rho;
      \]
\item let \(\mathcal E\) be any family of hyperedges disjoint from
      \(e\), and put \(h_b(g)=|g\cap\Om_b|\).  If \(\beta q>t\), then
      \[
      \left|\left\{g\in\mathcal E:
      \sum_{b=1}^t h_b(g)\ge\beta q\right\}\right|
      \le
      \frac{t^2\eta_*}{\beta(\beta-t/q)}\,q^2.
      \]
\end{enumerate}
Moreover,
\begin{equation}\label{eq:eta-star-bound}
 \eta_*
 \le \frac tq+2\xi+
 \frac{8\xi}{1-(t-1)/q}.
\end{equation}
\end{theorem}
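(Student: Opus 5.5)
The plan is to verify the items in order, working entirely from Proposition~\ref{prop:approx-blocking} and Proposition~\ref{prop:pair-defect}.

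\emph{Bound \eqref{eq:rho-bound}.} Since \(\Delta_p,\Delta_c\) are the two smallest deficits among the \(p_0=q+1-t\) unused colors, their average is at most the average over \(J\). Hence \(\Delta_p+\Delta_c\le 2L(e)/p_0\), which gives \(w\le 4L(e)/(q+1-t)\).

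\emph{Disjointness and item 1.} The sets \(\Om_b\) are pairwise disjoint because distinct labels \(b\ne b'\) give rows with singleton traces in disjoint branches. A point \(R\cap C\) with \(R\in\cF_p^b\), \(C\in\cF_c^{b'}\) is only used when the labels agree. Each pair \((R,C)\) with \(R\in\cF_p^b\), \(C\in\cF_c^b\) meets in exactly one point: they intersect by part 4, and in at most one point by Lemma~\ref{lem:trace-linear}. Distinct pairs give distinct points, since \(\cF_p,\cF_c\) are matchings. So \(|\Om_b|=|\cF_p^b||\cF_c^b|\ge(q-2\Delta_p)(q-2\Delta_c)\ge q^2-wq=(1-\rho)q^2\) by part 3.

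\emph{Item 2.} Take a singleton row \(B\) of label \(b\) and unused color \(i\). If \(i\in\{p,c\}\), say \(i=p\), then \(B\) meets every member of \(\cF_c^b\) at distinct points, and all of these lie in \(\Om_b\), giving \(\ge q-2\Delta_c\). Otherwise, \(B\) meets each \(R\in\cF_p^b\) (part 4), giving \(\ge q-2\Delta_p\) points on such \(R\). By part 5, at most \(2\Delta_c\) of these points fail to lie on a singleton \(c\)-row of label \(b\). Every other point lies in some \(C\in\cF_c^b\) and hence in \(\Om_b\). This yields \(\ge q-w=(1-\rho)q\).

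\emph{Item 3.} Sum part 3 over \(i\in J\): \(|\mathcal R_b|\ge \sum_{i\in J}(q-2\Delta_i)\ge p_0q-2L(e)=q^2-(t-1)q-2L(e)\).

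\emph{Item 4.} Nonnegativity is Proposition~\ref{prop:pair-defect}, since the rows come from distinct hyperedges of a linear hypergraph. For the upper bound, use \(|\Om_b|\le q^2\) and item 2, which gives \(s_B\ge(1-\rho)q\). Then
\[
 \cU_b\le \frac{q^4}{2}-|\mathcal R_b|\frac{(1-\rho)^2q^2-(1-\rho)q}{2}.
\]
Insert item 3 and expand, dropping positive lower-order terms. The expected main obstacle is getting the exact constants \(t/q+2\xi+2\rho\). The terms \((t-1)q\cdot q^2\) and \(q\cdot q^2\) combine into \(tq^3\), giving \(t/q\). The term \(2L(e)q^2\) gives \(2\xi\). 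The factor \((1-\rho)^2\ge1-2\rho\) gives \(2\rho\). Products of defects are dropped with the correct sign. If the \(-(1-\rho)q\) term spoils the count, I would absorb it instead into the \(t/q\) term via \((t-1)q+q=tq\), as above.

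\emph{Item 5.} Fix \(g\in\mathcal E\) with \(\sum_b h_b(g)\ge\beta q\). Write \(h_b=h_b(g)\). By Cauchy–Schwarz, \(\sum_b\binom{h_b}{2}\ge \frac{(\beta q)^2}{2t}-\frac{\beta q}{2}=\frac{\beta q(\beta q-t)}{2t}\). Each pair inside \(g\cap\Om_b\) is uncovered by \(\mathcal R_b\) (special case of Proposition~\ref{prop:pair-defect}, since \(g\) is disjoint from \(e\) and the rows meet \(e\)). Distinct \(g\) consume disjoint pairs. So the count of such \(g\) is at most
\[
 \frac{\sum_b\cU_b}{\beta q(\beta q-t)/(2t)}\le \frac{t\eta_*q^4/2}{\beta q^2(\beta-t/q)/(2t)}=\frac{t^2\eta_*q^2}{\beta(\beta-t/q)}.
\]

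\emph{Bound \eqref{eq:eta-star-bound}.} Substitute \eqref{eq:rho-bound}: \(2\rho\le 8L(e)/(q(q+1-t))=8\xi q/(q+1-t)\le 8\xi/(1-(t-1)/q)\).
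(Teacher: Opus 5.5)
Your proposal is correct and follows the paper's proof essentially step for step: averaging over \(J\) for \eqref{eq:rho-bound}, the injective map \((R,C)\mapsto R\cap C\) for item 1, inclusion--exclusion inside \(B\) for item 2, summing the singleton counts for item 3, the product bound with \((1-x)(1-y)\ge 1-x-y\) for item 4, and Cauchy--Schwarz plus disjoint pair consumption for item 5.

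Three justifications need repair, all easy to fix.
\begin{enumerate}
\item The disjointness of the \(\Om_b\) comes from \(\Om_b\subseteq\bigcup\cF_p^b\) together with the fact that \(\cF_p\) is a matching. The points \(R\cap C\) need not lie in \(U\), so disjointness of the branches alone does not give it.
\item In item 2, the fact that at most \(2\Delta_c\) points of \(B\) avoid \(\bigcup\cF_c^b\) follows from parts 3 and 4 of Proposition~\ref{prop:approx-blocking}: at least \(q-2\Delta_c\) rows of \(\cF_c^b\) meet \(B\) at distinct points. It does not follow from part 5 as stated, which counts rows rather than points.
\item In item 4, you should dispose of the case \(\eta_*\ge 1\) separately, as the paper does, so that replacing each factor by its lower bound is legitimate.
\end{enumerate}
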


\begin{proof}
The average of the two smallest nonnegative values is no larger than
the average of all values, so
\[
 \Delta_p+\Delta_c
 \le\frac{2}{p_0}\sum_{i\in J}\Delta_i
 \le\frac{2L(e)}{q+1-t}.
\]
This proves \eqref{eq:rho-bound}.  Proposition~\ref{prop:approx-blocking}
gives
\begin{equation}\label{eq:singleton-count}
 |\cF_i^b|\ge q-2\Delta_i
 \qquad(i\in J,\ b\in[t]).
\end{equation}

Every \(p\)-row and \(c\)-row with the same label intersects, and the
intersection map is injective.  Hence
\[
 |\Om_b|
 =|\cF_p^b|\,|\cF_c^b|
 \ge(q-2\Delta_p)(q-2\Delta_c)
 \ge q^2-wq.
\]
Packet disjointness follows from the color-\(p\) matching.

Let \(B\in\cF_i^b\).  If \(i\notin\{p,c\}\), then \(B\) meets every
retained \(p\)-row and every retained \(c\)-row.  Inclusion--exclusion
inside the \(q\)-set \(B\) gives
\[
 |B\cap\Om_b|
 \ge|\cF_p^b|+|\cF_c^b|-q
 \ge q-w.
\]
The cases \(i=p,c\) follow by intersecting with the opposite defining
class.  This proves part 2.  Summing
\eqref{eq:singleton-count} over \(i\in J\) gives
\[
 |\mathcal R_b|
 \ge(q+1-t)q-2\sum_{i\in J}\Delta_i
 \ge q^2-(t-1)q-2L(e),
\]
which is part 3.

Part 4 uses Proposition~\ref{prop:pair-defect}.  Since
\(|\Om_b|\le q^2\) and every row in \(\mathcal R_b\) has at least
\(q-w\) points in \(\Om_b\),
\[
 \cU_b
 \le\binom{q^2}{2}
 -\bigl(q^2-(t-1)q-2L(e)\bigr)\binom{q-w}{2}.
\]
Put
\[
 x=\frac{t-1}{q}+2\xi,\qquad
 y=1-\left((1-\rho)^2-\frac1q\right).
\]
If \(\eta_*\ge1\), the claimed upper bound follows from
\(\cU_b\le\binom{q^2}{2}\).  Otherwise \(x,y\in[0,1]\),
\(y\le2\rho+1/q\), and \(x+y\le\eta_*\).  The two factors subtracted
above are at least \(q^2(1-x)\) and \(q^2(1-y)/2\), respectively.
Since \((1-x)(1-y)\ge1-x-y\), part 4 follows.

For part 5, an external edge \(g\) consumes
\[
 \sum_{b=1}^t\binom{h_b(g)}{2}
\]
uncovered within-packet pairs, and distinct external edges consume
disjoint pairs.  If
\[
 H_g=\sum_{b=1}^t h_b(g)\ge\beta q,
\]
then Cauchy--Schwarz gives
\[
 \sum_{b=1}^t\binom{h_b(g)}{2}
 =\frac12\left(\sum_b h_b(g)^2-H_g\right)
 \ge\frac12\left(\frac{H_g^2}{t}-H_g\right)
 \ge\frac{\beta q(\beta q-t)}{2t}.
\]
Since
\[
 \sum_{b=1}^t\cU_b\le\frac{t\eta_*q^4}{2},
\]
division gives the stated bound.

Finally, \eqref{eq:rho-bound} and \(L(e)=\xi q^2\) give
\[
 2\rho\le\frac{8\xi}{1-(t-1)/q},
\]
which proves \eqref{eq:eta-star-bound}.
\end{proof}
\section{Prescribed-petal intersections}\label{sec:capacity}

The following bound links the reciprocal defect at one edge to the
uncovered-pair estimate associated with another.  The first petal in the
resulting rainbow matching is prescribed.

\begin{theorem}[Prescribed-petal intersection bound]\label{thm:capacity}
Let \(f\) be an edge of a linear
\(C_{1,t+1}^{q+1}\)-free hypergraph, suppose
\(d_H(z)\le D\) for every \(z\in f\), and put
\[
 \Delta_z=D-d_H(z),\qquad L(f)=\sum_{z\in f}\Delta_z.
\]
Fix \(x\in f\), and assume
\[
                         L(f)<q(q-t+1).
\]
If \(g\ne f\) meets \(f\) at a point different from \(x\), then \(g\)
meets at least \(q-\Delta_x\) distinct edges in
\[
                    \cE_x(f)=\{h\ne f:x\in h\}.
\]
The corresponding intersection points on \(g\) are distinct.
\end{theorem}

\begin{proof}
Suppose \(g\cap f=\{y\}\), where \(y\ne x\).  Seed a rainbow matching
in the petal trace at \(f\) with \(g\setminus\{y\}\), and extend it
maximally using only the \(q\) colors in \(f\setminus\{x\}\).  Let its
size be \(h\le t\), and let \(U\) be the union of its \(h\) disjoint
\(q\)-set traces.

If \(h<t\), there are \(q-h\) unused permitted colors.  They contain at
least
\[
                         (q-h)tq-L(f)
\]
petals.  Every such petal meets \(U\), by maximality.  At each point of
\(U\), at most one petal of each unused color occurs.  Therefore
\[
             (q-h)tq-L(f)\le |U|(q-h)=hq(q-h),
\]
and hence
\[
                         L(f)\ge q(q-h)(t-h).
\]
For \(h\le t-1\), the right side is at least
\(q(q-t+1)\): the function \((q-h)(t-h)\) decreases with \(h\) on this
range and has value \(q-t+1\) at \(h=t-1\).  This contradicts the assumed bound on \(L(f)\), so
\(h=t\).

Write the selected actual hyperedges as
\(g=g_1,g_2,\ldots,g_t\).  There are
\[
                         |\cE_x(f)|=d_H(x)-1=tq-\Delta_x
\]
edges in the opposite star.  Every one meets at least one \(g_j\), for
otherwise that edge together with \(g_1,\ldots,g_t\) would be a
\((t+1)\)-crown with base \(f\).  Each \(g_j\) meets at most \(q\)
members of \(\cE_x(f)\): its point on \(f\) cannot lie on an
\(x\)-edge, and each of its other \(q\) points lies on at most one
\(x\)-edge by linearity.  The last \(t-1\) selected edges therefore
meet at most \((t-1)q\) members of the opposite star.  The prescribed
edge \(g_1=g\) meets at least
\[
                    tq-\Delta_x-(t-1)q=q-\Delta_x
\]
of them.  Two such opposite-star edges cannot meet \(g\) at the same
point, since they already share \(x\).
\end{proof}

\section{Comparing external demand with uncovered pairs}\label{sec:criterion}

Fix \(0<a<1\).  An unused color \(i\) is \emph{good} if
\(\Delta_i\le aq\).  For a good color \(i\), let \(X_i\) contain every
point of every nonsingleton color-\(i\) trace row and, from each
singleton row of label \(b\), every point outside \(\Om_b\).
A neighbor \(f\) of \(e\) is \emph{retained} if its color is unused
and good and its trace row has a singleton branch label.

\begin{lemma}[Spill bound]\label{lem:spill}
For every good unused color \(i\),
\[
 |X_i|\le(a+t\rho)q^2.
\]
Let \(f\) be a retained neighbor of \(e\), let \(x=e\cap f\), and
suppose \(d_H(z)\le D\) for every \(z\in f\setminus\{x\}\).  Let
\(\mathcal E_f\) be the set of edges meeting \(f\) away from \(x\) and
disjoint from \(e\).  For every point set \(X\) outside \(e\), the
number of incidences
\[
 \{(g,z):g\in\mathcal E_f,\ z\in g\cap X\}
\]
is at most
\[
 q|X|+(t-1)q^2.
\]
\end{lemma}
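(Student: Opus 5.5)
The lemma has two independent parts, and I will prove them separately.

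\emph{Spill bound.} Fix a good unused color \(i\) and split \(X_i\) into two contributions.

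The first contribution comes from nonsingleton trace rows. By part 1 of Proposition~\ref{prop:approx-blocking}, at most \(\Delta_i\) rows of color \(i\) meet two or more branches. The traces of color-\(i\) rows on \(U\) are disjoint, and every row meets \(U\) by maximality. So any row with \(|B\cap U|\ge2\) is nonsingleton. With the reading that nonsingleton means ``not a singleton trace,'' the number of such rows is at most \(\sum_B(|B\cap U|-1)\le\Delta_i\). Each row has \(q\) points, so this contributes at most \(\Delta_i q\le aq^2\) points.

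The second contribution comes from singleton rows of label \(b\). By part 2 of Theorem~\ref{thm:least-deficient}, each such row has at most \(\rho q\) points outside \(\Om_b\). Part 3 of Proposition~\ref{prop:approx-blocking}, together with the fact that color \(i\) is a matching, gives at most \(q\) singleton rows per label. So this contributes at most \(t\cdot q\cdot\rho q=t\rho q^2\) points.

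Adding the two contributions gives \(|X_i|\le(a+t\rho)q^2\). The only care needed is the bound on the number of rows of each trace type.

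\emph{Incidence bound.} I group the incidences \((g,z)\) by the point \(z\in X\). For a fixed \(z\), I bound the number of edges \(g\in\mathcal E_f\) that contain \(z\).

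If \(z\notin f\), each such \(g\) meets \(f\) at a point \(y\in f\setminus\{x\}\). By linearity, \(y\) determines at most one edge through \(z\). Hence there are at most \(q\) such \(g\), which gives the term \(q|X|\).

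If \(z\in f\), then \(z\ne x\), since \(X\) lies outside \(e\) and \(x\in e\). Then \(g\) passes through \(z\), and the count is at most \(d_H(z)-1\le D-1=tq\). Summing this over the \(q\) points of \(f\setminus\{x\}\) would give \(tq^2\), which is too large.

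To get \((t-1)q^2\), I will look at the points \(z\in f\cap X\) instead. Every edge through such a \(z\) other than \(f\) must avoid \(e\) to lie in \(\mathcal E_f\). Because \(f\) is retained, \(f\setminus\{x\}\) is a singleton trace row of label \(b\), and by part 2 of Theorem~\ref{thm:least-deficient} most of its points lie in \(\Om_b\). The cleaner route, though, is to charge by \(g\). Each \(g\in\mathcal E_f\) meets \(f\) in exactly one point. Its intersection with \(X\) splits into \(|g\cap X\setminus f|\) plus at most one point on \(f\).

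The off-\(f\) part is bounded by \(q|X|\) as above. For the on-\(f\) part, I count the edges through points \(z\in f\setminus\{x\}\) that avoid \(e\). Such a \(z\) has at most \(tq\) edges other than \(f\). Of these, the ones meeting \(e\) pass through a point of \(e\setminus\{x\}\), at most one edge for each of the \(q\) points of \(e\setminus\{x\}\).

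This is the main obstacle. The crude count gives \(tq^2\), and to save \(q^2\) I need that, for each \(z\), about \(q\) of its edges meet \(e\). I will try Theorem~\ref{thm:capacity} with the roles reversed, taking \(f\) as base, \(x\) as the prescribed point and \(e\) playing \(g\); that would give \(e\) meeting \(q-\Delta_x\) edges through \(x\), which is not directly what is needed.

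The alternative I will try first is to note that only points \(z\in f\cap X\) contribute. If \(f\cap X\) contains at most \((t-1)q/t\) points up to a factor, the bound follows.

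Failing that, I will split the on-\(f\) incidences as \(tq\cdot|f\cap X|\). I will then observe that the claimed bound is meant with \(|X|\) counting these points as well, so that \(q|X|\) already absorbs \(q\) per point of \(f\cap X\). The remainder is \((t-1)q\cdot|f\cap X|\le(t-1)q^2\).

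This last accounting, with \(z\in f\) charged \(q\) inside \(q|X|\) and \((t-1)q\) in the error term, is the version I would commit to. It uses only \(d_H(z)-1\le tq\) and \(|f\setminus\{x\}|=q\), and it yields exactly \(q|X|+(t-1)q^2\).
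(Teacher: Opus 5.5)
Your proposal is correct and follows the paper's proof. For the spill bound, you count at most \(\Delta_i\le aq\) nonsingleton rows of \(q\) points each, plus at most \(tq\) singleton rows with at most \(\rho q\) points outside their packet. For the incidence bound, the accounting you commit to at the end, \(q|X\setminus f|+tq|X\cap f|\le q|X|+(t-1)q^2\) using \(|X\cap f|\le q\), is exactly the paper's. Two small corrections. The cap of \(q\) singleton rows per label holds because their traces are distinct points of \(A_b\); part 3 of Proposition~\ref{prop:approx-blocking} gives only a lower bound, so it does not apply. The abandoned intermediate attempts in your second part can simply be deleted.
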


\begin{proof}
At most \(\Delta_i\) rows of color \(i\) are nonsingleton, so they
contribute at most \(q\Delta_i\le aq^2\) points.  By
Theorem~\ref{thm:least-deficient}(2), every singleton row has at most
\(\rho q\) points outside its packet, and color \(i\) has at most
\(tq\) rows.  This proves the first assertion.

If \(z\notin f\), at most one edge through \(z\) can correspond to
each of the \(q\) possible intersection points in \(f\setminus\{x\}\),
so its multiplicity in \(\mathcal E_f\) is at most \(q\).  If
\(z\in f\setminus\{x\}\), its multiplicity is at most
\(d_H(z)-1\le D-1\).  Since \(X\cap f\) has at most \(q\) points, the
incidence count is at most
\[
 q|X\setminus f|+(D-1)|X\cap f|
 \le q|X|+(D-1-q)q
 =q|X|+(t-1)q^2.
\]
\end{proof}

\begin{theorem}[Uncovered-pair criterion]
\label{thm:criterion}
Assume Theorem~\ref{thm:least-deficient}.  Choose
\[
 0<a<1,\qquad \frac tq<\beta<1-a,
\]
and \(b>0\) satisfying
\[
 b<G_{q,t},\qquad D^2b<q(q-t+1).
\]
If
\begin{equation}\label{eq:criterion}
 (t-1)-\frac{D^2b}{q^2}
 >
 \frac{a+t\rho+(t-1)/q}{1-a-\beta}
 +
 \frac{t^2\eta_*}{\beta(\beta-t/q)},
\end{equation}
then every retained neighbor \(f\) of \(e\) satisfies
\(\delta_H(f)\ge b\).
\end{theorem}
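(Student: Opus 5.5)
Argue by contradiction. Suppose a retained neighbor \(f\) of \(e\) has \(\delta_H(f)<b\), let \(x=e\cap f\), and let \(i\in J\) be the color of \(f\), which is good. The plan is to produce more external edges around \(f\) than the uncovered pairs of the packets and the spill set \(X_i\) can absorb.

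\emph{Step 1: apply the earlier results at \(f\).} Since \(b<G_{q,t}\), Proposition~\ref{prop:degree-stability} applies to \(f\). It gives \(d_H(z)\le D\) for every \(z\in f\) and \(L(f)\le D^2\delta_H(f)<D^2b<q(q-t+1)\). Hence the hypotheses of Lemma~\ref{lem:spill} and of Theorem~\ref{thm:capacity} (with prescribed point \(x\)) hold. The point \(x\) is \(v_i\), so its deficiency at \(f\) is \(D-d_H(v_i)=\Delta_i\le aq\).

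\emph{Step 2: many external edges.} Count the edges \(g\ne f\) meeting \(f\) at a point \(y\ne x\). Each \(y\in f\setminus\{x\}\) lies on \(d_H(y)-1=tq-\Delta_y\) such edges, and by linearity no edge is counted twice. So there are at least \(q\cdot tq-L(f)\) of them. Any such \(g\) that meets \(e\) does so at a point of \(e\setminus\{x\}\), since \(g\) does not pass through \(x\). For fixed \(y\), linearity allows at most one edge through \(y\) and each of the \(q\) points of \(e\setminus\{x\}\). Therefore
\[
 |\mathcal E_f|\ge tq^2-L(f)-q^2\ge (t-1)q^2-D^2b .
\]

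\emph{Step 3: each external edge is expensive.} Fix \(g\in\mathcal E_f\). By Theorem~\ref{thm:capacity}, \(g\) meets at least \(q-\Delta_i\ge(1-a)q\) edges through \(x\) other than \(f\), at distinct points. None of these edges is \(e\), because \(g\) is disjoint from \(e\). So they are color-\(i\) petals of \(e\), and the intersection points lie on color-\(i\) rows. Each such point either lies in \(X_i\), or lies on a singleton row of some label \(b'\) inside \(\Om_{b'}\). Hence
\[
 |g\cap X_i|+\sum_{b'}h_{b'}(g)\ge(1-a)q .
\]
It follows that either \(|g\cap X_i|\ge(1-a-\beta)q\) or \(\sum_{b'}h_{b'}(g)\ge\beta q\).

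\emph{Step 4: bound both kinds and compare.}
\begin{itemize}
\item First kind. By Lemma~\ref{lem:spill} with \(X=X_i\), there are at most
\[
 \frac{q|X_i|+(t-1)q^2}{(1-a-\beta)q}\le\frac{a+t\rho+(t-1)/q}{1-a-\beta}\,q^2
\]
such edges.
\item Second kind. By Theorem~\ref{thm:least-deficient}(5), applicable since \(\beta q>t\), there are at most \(\frac{t^2\eta_*}{\beta(\beta-t/q)}q^2\) such edges.
\end{itemize}
Adding the two bounds and comparing with Step 2 contradicts \eqref{eq:criterion}. This proves \(\delta_H(f)\ge b\).

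The main delicate point is Step 3. One must check three things: that the \(x\)-edges met by \(g\) are genuinely color-\(i\) rows of the trace at \(e\); that the intersection points are distinct, so the per-row accounting into \(X_i\) or into a packet is valid; and that \(e\) itself does not spoil the count, which holds because \(g\) is disjoint from \(e\). A secondary check is the Step 2 count of edges that meet \(e\), where linearity through each \(y\) gives the \(q^2\) correction.
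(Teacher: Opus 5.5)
Your proposal is correct and follows the paper's proof essentially step for step. The steps match: degree stability at \(f\); the lower bound on \(|\mathcal E_f|\) obtained by discarding at most \(q\) edges through each point of \(f\setminus\{x\}\); Theorem~\ref{thm:capacity} forcing \((1-a)q\) distinct intersections with color-\(i\) rows; and the split into edges with many \(X_i\)-points (bounded by Lemma~\ref{lem:spill}) versus many packet points (bounded by Theorem~\ref{thm:least-deficient}(5)). Your explicit check that \(e\) is not among the \(x\)-edges met by \(g\), which the paper leaves implicit, is a worthwhile addition.
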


\begin{proof}
Suppose that a retained neighbor \(f\) satisfies \(\delta_H(f)<b\),
and put \(x=e\cap f=v_i\).  Since \(b<G_{q,t}\),
Proposition~\ref{prop:degree-stability} applied to \(f\) gives degree
at most \(D\) at every point of \(f\) and
\[
 L(f)\le D^2\delta_H(f)<D^2b<q(q-t+1).
\]
For each \(y\in f\setminus\{x\}\), at most \(q\) edges through \(y\),
other than \(f\), meet \(e\).  Hence
\[
 |\mathcal E_f|
 \ge\sum_{y\in f\setminus\{x\}}(d_H(y)-1-q)
 =(t-1)q^2-L(f)+\Delta_i
 >
 \left((t-1)-\frac{D^2b}{q^2}\right)q^2.
\]

Take \(g\in\mathcal E_f\).  Theorem~\ref{thm:capacity}, applied to
\(f\) with the prescribed edge \(g\), gives at least
\(q-\Delta_i\ge(1-a)q\) distinct intersections with color-\(i\) trace
rows at \(e\).  Every such intersection lies in
\[
 \Om:=\Om_1\cup\cdots\cup\Om_t
\]
or in \(X_i\).  Call \(g\) \emph{light} if
\(|g\cap\Om|<\beta q\).  A light edge contains more than
\((1-a-\beta)q\) points of \(X_i\).  Lemma~\ref{lem:spill} therefore
gives
\[
 |\mathcal E_f^{\mathrm{light}}|
 \le
 \frac{a+t\rho+(t-1)/q}{1-a-\beta}\,q^2.
\]
Every remaining edge contains at least \(\beta q\) packet points.
Theorem~\ref{thm:least-deficient}(5) bounds their number by
\[
 \frac{t^2\eta_*}{\beta(\beta-t/q)}\,q^2.
\]
Inequality \eqref{eq:criterion} contradicts the lower bound for
\(|\mathcal E_f|\).
\end{proof}

\section{Growing-crown repulsion}\label{sec:growing}

\begin{theorem}[Asymptotic local repulsion]\label{thm:growing}
Let \(q_j\to\infty\), let \(2\le t_j\le q_j\), and put
\[
 r_j=q_j+1,\qquad D_j=t_j q_j+1.
\]
Let \(e_j\) be an edge of a finite linear
\(C_{1,t_j+1}^{r_j}\)-free \(r_j\)-graph \(H_j\).  Assume
\[
 \frac{t_j^2}{q_j}\longrightarrow0,
 \qquad
 t_j^3\delta_{H_j}(e_j)\longrightarrow0.
\]
Then, for every fixed \(0<\theta<1\),
\[
 \frac{
 \bigl|\{f\ne e_j:f\cap e_j\ne\varnothing,\
 \delta_{H_j}(f)\ge\theta/t_j^2\}\bigr|
 }{r_j(D_j-1)}
 \longrightarrow1.
\]
\end{theorem}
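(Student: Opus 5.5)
The plan is to apply Theorem~\ref{thm:criterion} to \(e=e_j\) with threshold \(b=b_j=\theta/t_j^2\), and then to show that almost all neighbors of \(e_j\) are retained. First the asymptotic size of the error parameters. Set \(\delta_j=\delta_{H_j}(e_j)\). Since \(t_j^3\delta_j\to0\), eventually \(\delta_j<G_{q_j,t_j}\sim 1/(t_j(t_j-1))\). Proposition~\ref{prop:degree-stability} then gives \(L(e_j)\le D_j^2\delta_j\). This yields \(\xi_j\le (1+o(1))t_j^2\delta_j=o(1/t_j)\), and in particular \(L(e_j)<q_j(q_j-t_j+2)\), so Proposition~\ref{prop:approx-blocking} applies. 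Also \(p_0=q_j+1-t_j\ge2\). By \eqref{eq:rho-bound}, \(\rho_j\le(4+o(1))\xi_j\), so \(\rho_j<1\) and \(t_j\rho_j\to0\). Finally \eqref{eq:eta-star-bound} gives \(\eta_*\le t_j/q_j+O(\xi_j)\), hence \(t_j^2\eta_*\le t_j^3/q_j\cdot(1/t_j)\ldots\); more precisely \(t_j^2\eta_*=t_j^3/q_j+O(t_j^2\xi_j)\). The term \(t_j^2\xi_j\) is \(O(t_j^4\delta_j)\), which needs care, so the budget for \(\xi_j\) is the step to watch.

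Next I verify the hypotheses and inequality \eqref{eq:criterion}. Fix \(\beta=1/2\) and a small fixed \(a\in(0,1/4)\), to be sent to \(0\) at the end. Eventually \(t_j/q_j<\beta<1-a\). We have \(b_j<G_{q_j,t_j}\) because \(\theta/t^2<1/(t(t-1))\). We also have \(D_j^2b_j=\theta q_j^2(1+o(1))<q_j(q_j-t_j+1)\) because \(\theta<1\) and \(t_j/q_j\to0\). The left side of \eqref{eq:criterion} is \((t_j-1)-\theta(1+o(1))\ge 1-\theta-o(1)\). The right side is at most
\[
\frac{a+o(1)}{1/2-a}+\frac{t_j^2\eta_*}{\tfrac12(\tfrac12-o(1))}.
\]
For the last fraction I need \(t_j^2\eta_*\to0\), i.e.\ \(t_j^3/q_j\to0\) and \(t_j^2\xi_j\to0\). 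This is the main obstacle: the stated hypotheses control \(t_j^2/q_j\) and \(t_j^3\delta_j\), while this bound seems to ask for one more power of \(t_j\). I would first try to sharpen the accounting. The left side grows like \(t_j-1\), so it suffices that \(t_j^2\eta_*=o(t_j)\), i.e.\ \(t_j\eta_*\to0\), and this follows from \(t_j^2/q_j\to0\) and \(t_j\xi_j\le(1+o(1))t_j^3\delta_j\to0\). So I will compare both sides after dividing by \(t_j-1\) when \(t_j\to\infty\); along bounded subsequences of \(t_j\) everything is trivially \(o(1)\). With \(a\) small enough relative to \(1-\theta\), \eqref{eq:criterion} holds for large \(j\). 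Hence every retained neighbor \(f\) has \(\delta_{H_j}(f)\ge\theta/t_j^2\).

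It remains to count. All degrees on \(e_j\) are at most \(D_j\), so the number of neighbors is at most \(r_j(D_j-1)\), and the ratio is at most \(1\). I subtract the neighbors that are not retained. Used colors contribute at most \(t_j\cdot t_jq_j\) neighbors, which is a fraction \(t_j/(q_j+1)\to0\). Bad colors satisfy \(\Delta_i>aq_j\), so there are at most \(L(e_j)/(aq_j)\) of them, each with at most \(t_jq_j\) rows; this is a fraction at most \(\xi_j/a\to0\). Rows of good colors with nonsingleton trace number at most \(\sum\Delta_i\le L(e_j)=o(r_j(D_j-1))\). The count of all neighbors is \(r_j(D_j-1)-L(e_j)\). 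Together these show the ratio tends to \(1\) for each fixed \(a\), and since the conclusion does not depend on \(a\), the theorem follows.
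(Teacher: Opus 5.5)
Your proposal is correct and follows the paper's proof essentially step for step. It uses the same parameter choices in Theorem~\ref{thm:criterion} ($\beta=1/2$, $b=\theta/t^2$, a fixed small $a$), and it resolves the apparent extra power of $t$ in the same way: only $t\eta_*\to0$ is needed, since the second term on the right of \eqref{eq:criterion} is then $o(t)$ against a left side of order $t-1$, with bounded and unbounded $t$ handled separately. The final count of discarded neighbors (used colors, bad colors, nonsingleton rows) is also the paper's; the only cosmetic difference is that you never actually need to send $a\to0$, since any fixed $a$ small relative to $1-\theta$ already yields the limit.
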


\begin{proof}
Suppress the index \(j\).  Under \(t^2/q\to0\),
\[
 t^2G_{q,t}
 =
 \frac{t^2(q^2-(t-1)q-1)}
 {(tq+1)((t-1)q+1)}
 \ge1-o(1).
\]
The second hypothesis implies \(\delta_H(e)<G_{q,t}\) for all large
\(j\).  Proposition~\ref{prop:degree-stability} gives
\[
 \xi=\frac{L(e)}{q^2}
 \le\left(\frac Dq\right)^2\delta_H(e),
 \qquad
 t\xi\longrightarrow0.
\]
The condition of Proposition~\ref{prop:approx-blocking} holds
eventually, \(p_0=q+1-t\ge2\), and the packet construction is available.
By \eqref{eq:rho-bound} and \eqref{eq:eta-star-bound},
\[
 t\rho\longrightarrow0,\qquad
 \eta_*=\frac tq+O(\xi+\rho),\qquad
 t\eta_*\longrightarrow0.
\]

Set
\[
 \beta=\frac12,\qquad b=\frac{\theta}{t^2},
\]
and choose a fixed
\[
 0<a<\min\left\{\frac18,\frac{1-\theta}{16}\right\}.
\]
The two threshold conditions in Theorem~\ref{thm:criterion} hold for
all large \(j\): one has \(b<G_{q,t}\), while
\[
 \frac{D^2b}{q^2}
 =\theta\left(1+\frac1{tq}\right)^2<1-\frac{t-1}{q}.
\]
For \eqref{eq:criterion}, the left side is
\(t-1-\theta-o(1)\).  The first term on the right is
\(a/(1/2-a)+o(1)\).  The second term is \(o(t)\), because
\(t\eta_*\to0\).  If \(t\) is bounded, it is \(o(1)\); if
\(t\to\infty\), divide the inequality by \(t\).  The choice of \(a\)
then makes \eqref{eq:criterion} valid for all large \(j\).
Thus every retained neighbor has defect at least \(\theta/t^2\).

It remains to count retained neighbors.  The \(t\) colors used by the
blocking matching contain at most \(t^2q\) rows.  The bad unused colors
contain at most \((t/a)L(e)\) rows, and the nonsingleton rows in the
good unused colors contribute at most \(L(e)\).  Hence at most
\[
 t^2q+\left(\frac ta+1\right)L(e)
\]
neighboring edges are discarded.  The actual number of neighbors of
\(e\) is
\[
 \sum_{v\in e}(d_H(v)-1)
 =r(D-1)-L(e).
\]
Dividing the total shortage by
\(r(D-1)=tq(q+1)\) gives
\[
 O\left(\frac tq+\frac{\xi}{a}+\frac{\xi}{t}\right)=o(1).
\]
The claimed limit follows.
\end{proof}

\begin{corollary}[Uniform local repulsion]\label{cor:uniform-local}
There are absolute constants \(Q_0<\infty\) and \(\varepsilon_0>0\)
such that the following holds.  If
\[
 q\ge Q_0t^2
\]
and \(e\) is an edge of a finite linear
\(C_{1,t+1}^{q+1}\)-free hypergraph with
\[
 \delta_H(e)<\frac{\varepsilon_0}{t^3},
\]
then at least
\[
 \frac12r(D-1)
\]
neighbors \(f\) of \(e\) satisfy
\[
 \delta_H(f)\ge\frac1{100t^2}.
\]
\end{corollary}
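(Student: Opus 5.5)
The plan is to make the proof of Theorem~\ref{thm:growing} quantitative.  Every \(o(1)\) there becomes an explicit bound in terms of the two small parameters \(1/Q_0\) and \(\varepsilon_0\).  We apply Theorem~\ref{thm:criterion} with \(b=1/(100t^2)\), \(\beta=1/2\) and a fixed \(a\), say \(a=1/16\).  We then count the retained neighbors as before, now asking only that they make up at least half of \(r(D-1)\), not almost all of it.

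\emph{Step 1: small parameters.}  From \(q\ge Q_0t^2\) we have \(t/q\le 1/(Q_0t)\) and \(D/q\le t+1/q\le 2t\).  This gives
\[
t^2G_{q,t}\ge 1-O(1/Q_0),
\]
so \(\delta_H(e)<\varepsilon_0/t^3<G_{q,t}\) once \(\varepsilon_0\) is small.  Proposition~\ref{prop:degree-stability} then yields
\[
\xi\le (D/q)^2\delta_H(e)\le 4\varepsilon_0/t,
\]
so \(t\xi\le4\varepsilon_0\).  Hence \(L(e)=\xi q^2\le 4\varepsilon_0 q^2/t<q(q-t+2)\), and \(p_0=q+1-t\ge2\).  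Therefore Proposition~\ref{prop:approx-blocking} and Theorem~\ref{thm:least-deficient} apply.  The bounds \eqref{eq:rho-bound} and \eqref{eq:eta-star-bound} give
\[
t\rho\le 8t\xi(1+O(1/Q_0))=O(\varepsilon_0),
\qquad
t\eta_*\le t^2/q+O(t\xi)\le 1/Q_0+O(\varepsilon_0).
\]

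\emph{Step 2: verify \eqref{eq:criterion}.}  With \(b=1/(100t^2)\) we get \(b<G_{q,t}\), and \(D^2b/q^2\le(1+1/(tq))^2/100<1/50\).  The side condition \(D^2b<q(q-t+1)\) is immediate.  The left side of \(\eqref{eq:criterion}\) is at least \(t-1-1/50\ge 1-1/50\), using \(t\ge2\) (the case \(t=1\) is excluded by \(2\le t\)).

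On the right side, the first term is
\[
\frac{a+t\rho+(t-1)/q}{1/2-a}\le \frac{1/16+O(\varepsilon_0)+O(1/Q_0)}{7/16},
\]
which is about \(1/7\).  The second term is
\[
\frac{t^2\eta_*}{\tfrac12(\tfrac12-t/q)}\le 5t\cdot t\eta_*=t\cdot O(1/Q_0+\varepsilon_0).
\]
We need these two terms to total less than \(t-1.02\) for every \(t\ge2\).  Since \(t-1.02\ge 0.49\,t\) for \(t\ge2\), it suffices that the constant part stays below \(0.49\,t-0.15\) and that the \(O(\cdot)\) coefficient is at most \(0.3\).  Both hold once \(Q_0\) is large and \(\varepsilon_0\) is small.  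This is the step I expect to be the main obstacle: we must make sure every implicit constant (\(D/q\le2t\), the factor \(1/(1-(t-1)/q)\), and so on) is absolute and independent of \(t\).  Note that the second term grows linearly in \(t\), so it must be compared with \(t\) itself and not with \(1\).

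\emph{Step 3: count retained neighbors.}  By Theorem~\ref{thm:criterion}, every retained neighbor satisfies \(\delta_H(f)\ge 1/(100t^2)\).  As in the proof of Theorem~\ref{thm:growing}, the number of neighbors of \(e\) is \(r(D-1)-L(e)\).  The discarded ones number at most
\[
t^2q+(t/a+1)L(e)\le t^2q+17t\,\xi q^2 .
\]
Dividing by \(r(D-1)=tq(q+1)\) gives a relative loss of at most
\[
\frac{t}{q}+\frac{18\,\xi}{1}\cdot\frac{q}{q+1}+\frac{\xi}{t}
\le \frac1{Q_0}+O(\varepsilon_0),
\]
where we used \(\xi\le 4\varepsilon_0/t\).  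This is below \(1/2\) for a suitable choice of constants, which yields the \(\tfrac12 r(D-1)\) retained neighbors claimed in Corollary~\ref{cor:uniform-local}.
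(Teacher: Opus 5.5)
Your proposal is correct and follows the paper's proof: it quantifies the proof of Theorem~\ref{thm:growing} with \(\beta=\tfrac12\) and \(b=1/(100t^2)\), checks \eqref{eq:criterion} by comparing the second term, which grows like \(t\), against \(t-1\), and bounds the discarded neighbors by \(t^2q+(t/a+1)L(e)\). The only differences are cosmetic: you take \(a=1/16\) where the paper takes \(a=1/100\), and your numerical constants are looser (\(\xi\le4\varepsilon_0/t\) instead of \(2\varepsilon_0/t\), and an overestimate for \(t\rho\)), none of which affects the argument.
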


\begin{proof}
Set
\[
 a=\frac1{100},\qquad \beta=\frac12,
 \qquad b=\frac1{100t^2}.
\]
Choose \(\varepsilon_0>0\) so that
\(\varepsilon_0\le1/100\) and \(132\varepsilon_0<1/100\).  Then choose
\(Q_0\) so large that all inequalities used below, including the final
retention estimate, hold whenever \(q\ge Q_0t^2\).  Assume
\(\delta_H(e)<\varepsilon_0/t^3\).  Then
\(\delta_H(e)<G_{q,t}\), so Proposition~\ref{prop:degree-stability}
gives
\[
 \xi\le\left(\frac Dq\right)^2\frac{\varepsilon_0}{t^3}
 \le\frac{2\varepsilon_0}{t}.
\]
Equation \eqref{eq:rho-bound} also gives
\[
 \rho\le\frac{9\varepsilon_0}{t},
\]
and therefore
\[
 t\eta_*
 =\frac{t^2}{q}+2t\xi+2t\rho
 \le Q_0^{-1}+22\varepsilon_0.
\]
The same choice of \(Q_0\) ensures
\(b<G_{q,t}\) and \(D^2b<q(q-t+1)\).

It remains to verify \eqref{eq:criterion}.  Its first term on the
right is at most
\[
 \frac{1/100+9\varepsilon_0+1/(2Q_0)}{49/100}.
\]
Moreover \(t/q\le1/(2Q_0)\), and hence its second term is at most
\[
 6t\bigl(Q_0^{-1}+22\varepsilon_0\bigr).
\]
The left side is at least
\[
 t-1-\frac1{100}\left(1+\frac1{tq}\right)^2.
\]
The choice of \(\varepsilon_0\) makes the terms independent of \(Q_0\)
sufficiently small, and the choice of \(Q_0\) makes the remaining
\(Q_0^{-1}\)-terms small enough that the right side is below the displayed
left side for every \(t\ge2\).  Hence Theorem~\ref{thm:criterion} applies
uniformly.

The number of missing or nonretained neighbors is at most
\[
 L(e)+t^2q+\left(\frac ta+1\right)L(e).
\]
Dividing by \(r(D-1)=tq(q+1)\) gives at most
\[
 \frac{t}{q+1}
 +\left(\frac1a+\frac2t\right)\xi.
\]
By the same choices of \(\varepsilon_0\) and \(Q_0\), this quantity is less than \(1/2\).  Thus at least half of the neighbors are
retained, and Theorem~\ref{thm:criterion} gives
\(\delta_H(f)\ge1/(100t^2)\) on all of them.
\end{proof}

\section{A uniform Tur\'an gap}\label{sec:global}

For a finite hypergraph \(H\), put
\[
 \Def(H)=\sum_{e\in E(H)}\delta_H(e).
\]
Double counting reciprocal incidences gives
\begin{equation}\label{eq:def-identity}
 |V^+(H)|=\frac rD|E(H)|+\Def(H),
\end{equation}
where \(V^+(H)\) is the set of nonisolated vertices.

\begin{corollary}[Growing-crown Tur\'an gap]\label{cor:global-gap}
There are absolute constants \(Q_0<\infty\) and \(c_0>0\) such that
\[
 q\ge Q_0t^2
 \quad\Longrightarrow\quad
 \clin_{q+1,t+1}
 \le t-\frac{c_0}{t},
\]
where
\[
 \clin_{r,k}
 =
 \limsup_{n\to\infty}
 \frac{\operatorname{ex}^{\mathrm{lin}}_r(n,C^r_{1,k})}{n}.
\]
\end{corollary}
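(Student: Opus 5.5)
The plan is to use the identity \eqref{eq:def-identity}, which gives \(|E(H)|=\frac Dr\bigl(|V^+(H)|-\Def(H)\bigr)\le \frac Dr n-\frac Dr\Def(H)\). Since \(D/r=(tq+1)/(q+1)=t-\frac{t-1}{q+1}\le t\), it suffices to show that \(\Def(H)\ge c_1|E(H)|/t^2\) whenever \(|E(H)|\) is comparable to \(tn\). The leading coefficient would then drop from \(t\) to roughly \(t-c_1\cdot(\text{const})/t\). If instead \(|E(H)|\le(t-1)n\), the bound is trivial, since \(t-1\le t-c_0/t\) for \(t\ge 1\) and \(c_0\le1\).

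Let \(\varepsilon_0,Q_0\) be the constants of Corollary~\ref{cor:uniform-local}, and call an edge \emph{sharp} if \(\delta_H(e)<\varepsilon_0/t^3\). There are two cases. If at most half of the edges are sharp, then \(\Def(H)\ge\frac12|E(H)|\varepsilon_0/t^3\). This is only of order \(1/t^3\), which is too weak, so the threshold has to be chosen differently: call \(e\) \emph{low} if \(\delta_H(e)<1/(100t^2)\). If at least \(|E|/2\) edges are not low, then \(\Def\ge |E|/(200t^2)\) and we are done. Otherwise, among the low edges, those with \(\delta\ge\varepsilon_0/t^3\) are harmless only at order \(t^{-3}\). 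To handle this, I would run the argument directly on sharp edges. By Corollary~\ref{cor:uniform-local}, each sharp edge \(e\) has at least \(\frac12 r(D-1)=\frac12 tq(q+1)\) neighbors \(f\) with \(\delta_H(f)\ge 1/(100t^2)\). For the double count, observe that each edge \(f\) has at most \(\sum_{z\in f}(d(z)-1)\) neighbors. A neighbor \(f\) with \(\delta\) above the threshold may have large degrees, but then \(\delta_H(f)\) is correspondingly large, so I would bound the number of neighbors of \(f\) by \(r(D-1)+O(D^2\cdot\text{stuff})\). The simplest route is to charge each sharp edge \(e\) weight \(1/(r(D-1))\) per adjacent high-defect neighbor, and to use \(\sum_{z\in f}(d(z)-1)\le r\max d\). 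The main obstacle is this degree blow-up on \(f\).

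To control the degree blow-up, I would use that \(\Phi_H(f)\ge1/d(z)\) fails to help directly. Instead I would truncate: for each neighbor \(f\), count only the sharp edges meeting it at vertices \(z\) with \(d(z)\le D\). Sharp edges have all degrees at most \(D\) by Proposition~\ref{prop:degree-stability}, so each vertex \(z\in f\) lies in at most \(D\) sharp edges. Hence \(f\) is adjacent to at most \(rD\) sharp edges. Double counting the pairs (sharp \(e\), high-defect neighbor \(f\)) then gives
\[
\#\{f:\delta_H(f)\ge 1/(100t^2)\}\ge \frac{\frac12 tq(q+1)\,\#\text{sharp}}{(q+1)(tq+1)}\ge \tfrac14\#\text{sharp}.
\]

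Combining the cases, either at least a constant fraction of the edges (at least \(|E|/8\), say) have \(\delta\ge 1/(100t^2)\), or the sharp edges number at most \(|E|/2\). In the latter case at least half the edges have \(\delta\ge\varepsilon_0/t^3\). That second case still only yields order \(1/t^3\), so I would repair it by splitting the non-sharp edges at \(1/(100t^2)\). The edges with \(\varepsilon_0/t^3\le\delta<1/(100t^2)\) are the genuine difficulty. I expect either that Corollary~\ref{cor:uniform-local} must be reapplied with \(\varepsilon_0/t^3\) replaced by a threshold of order \(1/t^2\), which would require rechecking its constants, or that the intended statement absorbs this via \(q\ge Q_0t^2\). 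I would first try the reapplication. In either case the conclusion is \(\Def(H)\ge c_1|E|/t^2\), which gives \(|E|\le (t-c_0/t)n+o(n)\) after taking the limsup.
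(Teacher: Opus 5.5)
There is a genuine gap, and it comes from an arithmetic slip in your first paragraph. Write $m=|E(H)|$. In $m\le\frac Dr n-\frac Dr\Def(H)$, a bound $\Def(H)\ge\gamma m$ subtracts $\frac Dr\gamma m\approx t\cdot\gamma\cdot tn$. So the coefficient drops by about $t^2\gamma$, not $t\gamma$. Precisely, $n\ge m(1/y+\gamma)$ with $y=D/r\le t$, so $m\le yn/(1+y\gamma)\le tn/(1+t\gamma)$.

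Hence $\gamma=c/t^3$ already gives $m\le tn/(1+c/t^2)\le (t-\frac{c}{2t})n$, which is exactly the claimed $c_0/t$ gap. Your target $\gamma=c_1/t^2$ would instead give $t-\Theta(c_1)$. That is an order-one gap, stronger than the corollary and not something the paper's tools deliver.

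Because of this slip, you discard the case ``at most half the edges are sharp'' as too weak. You then treat the band $\varepsilon_0/t^3\le\delta_H(e)<1/(100t^2)$ as a genuine difficulty and rest the conclusion on an unproved bound. The repair you propose for that band does not work. Corollary~\ref{cor:uniform-local} cannot be rerun at a threshold of order $t^{-2}$, because its verification of \eqref{eq:criterion} needs $t\xi$ and $t\rho$ to be small. At that threshold, Proposition~\ref{prop:degree-stability} only gives $\xi\le (D/q)^2\delta_H(e)\approx t^2\delta_H(e)=O(1)$, so $t\xi$ is only bounded by $O(t)$. Taking $q\ge Q_0t^2$ does not help, since $D/q\approx t$ regardless of $q$. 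This is why Theorem~\ref{thm:growing} assumes $t^3\delta_{H_j}(e_j)\to0$.

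Once you accept the $t^{-3}$ bound, your own two cases finish the proof essentially as the paper does.
\begin{itemize}
\item If at most half the edges are sharp, then $\Def(H)\ge\varepsilon_0 m/(2t^3)$.
\item Otherwise, use your double count. Each sharp edge has at least $\frac12 r(D-1)$ neighbors of defect at least $1/(100t^2)$. Each such neighbor meets at most $rD$ sharp edges, since sharp edges have all degrees at most $D$. This yields at least $m/8$ such neighbors, so $\Def(H)\ge m/(800t^2)$.
\end{itemize}
Thus $\Def(H)\ge\min\{\varepsilon_0/(2t^3),\,1/(800t^2)\}\,m\ge c\,m/t^3$ for an absolute $c>0$, using $t\ge2$. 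The computation above then gives $\clin_{q+1,t+1}\le t-c_0/t$. The paper merges the two cases via $\Def(H)\ge\max\{A(m-x),\lambda bx\}\ge\frac{A\lambda b}{A+\lambda b}m$, which is the same argument.
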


\begin{proof}
Take \(Q_0,\varepsilon_0\) from
Corollary~\ref{cor:uniform-local}, and put
\[
 A=\frac{\varepsilon_0}{t^3},
 \qquad
 \lambda=\frac12,
 \qquad
 b=\frac1{100t^2}.
\]
Let
\[
 \mathcal G=\{e\in E(H):\delta_H(e)<A\},
 \qquad
 m=|E(H)|,\qquad x=|\mathcal G|.
\]
Edges outside \(\mathcal G\) give
\[
 \Def(H)\ge A(m-x).
\]
Every \(e\in\mathcal G\) has at least
\(\lambda r(D-1)\) neighboring witness edges of defect at least \(b\).
A fixed witness \(f\) is counted for at most \(r(D-1)\) members of
\(\mathcal G\).  Indeed, if \(e\in\mathcal G\) meets \(f\) at \(v\),
degree stability at \(e\) gives \(d_H(v)\le D\); for each of the \(r\)
points of \(f\) there are at most \(D-1\) other edges through that
point.  Linearity counts an edge at most once, because two distinct points of \(f\) cannot lie on the same other edge.
There are therefore at least \(\lambda x\) distinct witnesses, and
\[
 \Def(H)\ge\lambda bx.
\]
Consequently
\[
 \Def(H)
 \ge
 \max\{A(m-x),\lambda bx\}
 \ge
 \frac{A\lambda b}{A+\lambda b}\,m.
\]
Because \(\varepsilon_0\le1/100\) and \(t\ge2\),
\[
 \frac{A\lambda b}{A+\lambda b}
 \ge\frac{\varepsilon_0}{2t^3}.
\]
Equation \eqref{eq:def-identity} now gives
\[
 \frac{|E(H)|}{|V(H)|}
 \le
 \frac{1}{r/D+\varepsilon_0/(2t^3)}.
\]
Write \(y=D/r\).  Under \(q\ge Q_0t^2\), one has
\(t/2\le y\le t\).  Hence
\[
 \frac{1}{r/D+\varepsilon_0/(2t^3)}
 =
 \frac{y}{1+y\varepsilon_0/(2t^3)}
 \le
 \frac{t}{1+\varepsilon_0/(4t^2)}
 \le
 t-\frac{\varepsilon_0}{8t},
\]
using \(\varepsilon_0\le1\).  The result follows with
\(c_0=\varepsilon_0/8\).
\end{proof}

\section{Conclusion}\label{sec:conclusion}

Theorem~\ref{thm:growing} shows that the equality configuration is
locally repelling when \(t^2/q\to0\): if one edge has
\(t^3\delta_H(e)=o(1)\), then almost all of its neighbors have defect at
least a fixed multiple of \(t^{-2}\).  Corollary~\ref{cor:global-gap}
turns the uniform form of this statement into a deficit of order \(1/t\)
from the coefficient \(t\).

The two error scales in the local theorem have distinct sources.  The term
\(t^2/q\) measures the loss caused by the \(t\) colors used in a blocking
matching, while \(t^3\delta_H(e)\) controls the accumulated degree
shortfall on the base edge.  Extending the argument beyond
\(t^2/q=o(1)\) would require a sharper use of the pair coverage contributed
by the colors in the blocking matching.

\end{document}